\documentclass[11pt]{amsart}
\title[ Lefschetz coincidence numbers of  solvmanifolds ]
{ Lefschetz coincidence numbers of solvmanifolds with Mostow conditions}

\author{Hisashi Kasuya}

\usepackage{amssymb}
\usepackage{amsmath}
\usepackage{amscd}
\usepackage{amstext}
\usepackage{amsfonts}
\usepackage[all]{xy}

\theoremstyle{plain}

\theoremstyle{plain}

\theoremstyle{plain}

\theoremstyle{plain}
\newtheorem{theorem}{Theorem}[section] 
\theoremstyle{remark}

\theoremstyle{Main result}
\newtheorem{main result}{Main result}
\theoremstyle{lemma}
\newtheorem{lemma}[theorem]{Lemma}
\theoremstyle{definition}
\newtheorem{definition}[theorem]{Definition}
\theoremstyle{proposition}
\newtheorem{proposition}[theorem]{Proposition}
\theoremstyle{corollary}

\theoremstyle{remark}

\address[Hisashi Kasuya]{Department of Mathematics, Tokyo Institute of Technology, 2-12-1 Ookayama, Meguro, Tokyo 152-8551, Japan}
\email{kasuya@math.titech.ac.jp}

\keywords{de Rham cohomology, lefschetz coincidence number,  solvmanifold}
\subjclass[2010]{22E25;	53C30;  54H25; 55M20  }

\newcommand{\C}{\mathbb{C}}
\newcommand{\R}{\mathbb{R}}

\newcommand{\g}{\frak{g}}
\newcommand{\n}{\frak{n}}

\begin{document} 

\maketitle
\begin{abstract}
For any two continuous maps $f,g$ between two solvmanifolds of same dimension  satisfying the Mostow condition,
we give a technique of computation of the Lefschetz coincidence number of $f,g$.
This result is an extension of the result of Ha, Lee and Penninckx for completely solvable case.
\end{abstract}
\section{Introduction}

For two compact oriented manifolds $M_{1}$ and $M_{2}$ of the same dimension, for two continuous maps $f,g:M_{1}\to M_{2}$, as generalizations of the Lefschetz number and the Nielsen number for topological fixed point theory, the Lefschetz coincidence number $L(f,g)$ and the Nielsen coincidence number $N(f,g)$ are defined.
The Nielsen coincidence number $N(f,g)$ is a lower bound for the number of connected components of coincidences of $f$ and $g$.
But computing the Nielsen coincidence number is very difficult.
For some classes of manifolds, we have relationships between 
the Lefschetz coincidence number $L(f,g)$ and the Nielsen coincidence number $N(f,g)$.

Let $G$ be a simply connected solvable Lie group with a lattice (i.e. cocompact discrete subgroup of $G$) $\Gamma$.
We call $G/\Gamma$ a solvmanifold.
If $G$ is nilpotent, we call $G/\Gamma$ a nilmanifold.

For two solvmanifolds $G_{1}/\Gamma_{1}$ and $G_{2}/\Gamma_{2}$ with two continuous maps $f,g:G_{1}/\Gamma_{1}\to G_{2}/\Gamma_{2}$,
in \cite{Wa}, Wang showed the inequality 
\[\vert L(f,g)\vert \le N(f,g).
\]
Hence by Lefschetz coincidence number  $L(f,g)$ we can estimate the number of coincidences of $f,g$.
Suppose that $G_{1}$ and $G_{2}$ are completely solvable i. e.  for any element of $G$ the all eigenvalues of the adjoint operator of $g$ are real.
Then the de Rham cohomologies of solvmanifolds $G_{1}/\Gamma_{1}$ and $G_{2}/\Gamma_{2}$  are isomorphic to the cohomologies of the Lie algebras of  $G_{1}$ and $G_{2}$.
Moreover for the induced maps $f_{\ast},g_{\ast}:\pi_{1}(G_{1}/\Gamma_{1})\cong \Gamma_{1}\to \Gamma_{2}\cong \pi_{1}(G_{2}/\Gamma_{2})$, we can take homomorphisms $\Phi,\Psi:G_{1}\to G_{2}$ which are extensions of $f_{\ast},g_{\ast}$.
In \cite{HLP}, Ha, Lee and Penninckx computed the Lefschetz coincidence number  $L(f,g)$  by using "linearizations" $\Phi,\Psi$ of $f$ and $g$.

In this paper, for a solvmanifold $G/\Gamma$  we consider  the Mostow condition: "$ {\rm Ad}(G)$ and $ {\rm Ad}(\Gamma)$ have the same Zariski-closure in $ {\rm Aut}(\g_{\C})$" where ${\rm Ad}$ is the adjoint representation of a Lie group $G$.
The condition: "$G$ is completely solvable" is a special case of the Mostow condition (see \cite{Wi} and \cite{CF}). 
In \cite{Mosc}, Mostow showed that for a solvmanifold $G/\Gamma$ satisfying the Mostow condition,  the de Rham cohomology of  $G/\Gamma$ is  also isomorphic to the cohomology of the Lie algebra of  $G$.
However, for two solvmanifolds $G_{1}/\Gamma_{1}$ and $G_{2}/\Gamma_{2}$ satisfying the Mostow conditions,
extendability of homomorphisms between lattices $\Gamma_{1}$ and $\Gamma_{2}$ is not valid. 
(For isomorphisms, "virtually" extendability is known (\cite{Wi})).
Thus in order to compute  the Lefschetz coincidence number  $L(f,g)$ of   two continuous maps $f,g:G_{1}/\Gamma_{1}\to G_{2}/\Gamma_{2}$ between solvmanifolds satisfying  the Mostow condition, we should give new idea of "linearizations".

In this paper, we give a technique of  linearizations of all maps 
between solvmanifolds satisfying  the Mostow condition
and we give a formula for the Lefschetz coincidence number which is similar to the result by Ha, Lee and Penninckx (\cite{HLP}).

\section{Lefschetz numbers and spectral sequences}

Let $V^{\ast}$  be a finite dimensional graded vector space and $f^{\ast}:V^{\ast}\to V^{\ast}$ a graded linear map.
Then we denote \[L(f)=\sum_{i}(-1)^{i}{\rm tr}\, f^{i}.\]
\begin{lemma}\label{spelm}
Let $C^{\ast}$  be a bounded filtered cochain complex and $f^{\ast}:C^{\ast}\to C^{\ast}$ a morphism of filtered cochain complex with the induced map $H^{\ast}(f):H^{\ast}(C^{\ast})\to H^{\ast}(C^{\ast})$.
Consider the spectral sequences $E_{r}^{\ast,\ast}(C^{\ast})$  of $C^{\ast}$ 
and the map $E_{r}^{\ast,\ast}(f):E_{r}^{\ast,\ast}(C^{\ast})\to E_{r}^{\ast,\ast}(C^{\ast})$ induced by $f^{\ast}$.
Consider the graded linear map ${\rm Tot}^{\ast}E_{r}^{\ast,\ast}(f):{\rm Tot}^{\ast}E_{r}^{\ast,\ast}(C^{\ast})\to {\rm Tot}^{\ast}E_{r}^{\ast,\ast}(C^{\ast})$ for the total complex.
We suppose that for some integer $s$, for $r\ge s$, the $E_{r}$-term $E_{r}^{\ast,\ast}(C^{\ast})$ is finite dimensional.

Then for each $r\ge s$, we have
\[L(H^{\ast}(f))=L({\rm Tot}^{\ast}E_{r}^{\ast,\ast}(f)).
\]

\end{lemma}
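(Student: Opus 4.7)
The plan is to apply the Hopf trace identity page by page of the spectral sequence, and then use convergence together with invariance of trace under passage to the associated graded to identify the common value with $L(H^{\ast}(f))$. The fundamental tool is the classical Hopf trace theorem: for any finite-dimensional cochain complex $(K^{\ast},d)$ with a chain endomorphism $\phi$, one has $L(\phi)=L(H^{\ast}(\phi))$, proved by writing each $K^{n}$ as an extension of $Z^{n}$, $B^{n}$ and a complement and adding up the traces on the pieces.

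First I would observe that on page $r$ the differential $d_{r}:E_{r}^{p,q}\to E_{r}^{p+r,q-r+1}$ has total degree $+1$, so $(\mathrm{Tot}^{\ast}E_{r}^{\ast,\ast},d_{r})$ is a genuine cochain complex, finite-dimensional by hypothesis for $r\ge s$, whose cohomology is $\mathrm{Tot}^{\ast}E_{r+1}^{\ast,\ast}$, and on which $E_{r}^{\ast,\ast}(f)$ is a chain endomorphism. The Hopf trace theorem applied at page $r$ therefore yields
\[
L(\mathrm{Tot}^{\ast}E_{r}^{\ast,\ast}(f))=L(\mathrm{Tot}^{\ast}E_{r+1}^{\ast,\ast}(f))
\]
for every $r\ge s$, so all of these Lefschetz numbers coincide. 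Because $C^{\ast}$ is a bounded filtered complex, there exists $N\ge s$ with $E_{N}^{\ast,\ast}=E_{\infty}^{\ast,\ast}$, and $E_{\infty}^{p,q}$ is canonically identified with $\mathrm{gr}^{p}H^{p+q}(C^{\ast})$, the induced endomorphism being the associated graded of $H^{\ast}(f)$. Since trace is additive on short exact sequences of endomorphisms, it is preserved under passing to the associated graded of a finite filtration, whence $L(\mathrm{Tot}^{\ast}E_{\infty}^{\ast,\ast}(f))=L(H^{\ast}(f))$, closing the chain of equalities.

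The main delicacy is bookkeeping rather than any deep idea: one must check that finite-dimensionality propagates from $E_{s}$ to all later pages (immediate, since $E_{r+1}$ is a subquotient of $E_{r}$), that boundedness of the filtration simultaneously ensures that each bidegree $(p,q)$ eventually stabilizes to $E_{\infty}^{p,q}$ and that the induced filtration on every $H^{n}(C^{\ast})$ has only finitely many nonzero steps (making the associated-graded trace argument legitimate), and that the sign conventions on $\mathrm{Tot}^{\ast}$ are consistent, so that summing traces over bidegrees genuinely computes the Lefschetz number of the total-degree endomorphism. These verifications are standard but are where all the care is needed.
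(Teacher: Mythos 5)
Your argument is correct and is essentially the paper's own proof: both use the Hopf trace lemma to show the alternating trace is constant from page $s$ onward, then use convergence of the bounded spectral sequence and additivity of trace over the induced filtration on $H^{\ast}(C^{\ast})$ to identify the stable value with $L(H^{\ast}(f))$. The only difference is presentational (you march up from page $s$ to $E_{\infty}$, the paper descends from the limiting page back to page $s$), so no further comment is needed.
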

\begin{proof}
By the assumption, sufficiently large $r$, we have
\[E_{r}^{p+q}(C)\cong F^{p}H^{p+q}(C)/F^{p+1}H^{p+q}(C).
\]
Hence by using the property of trace (see \cite[Proposition 2.3.11]{JM}) we have
\[\sum_{p+q=k}{\rm tr}\, E^{p,q}_{r}(f)={\rm tr} H^{k}(f).
\]
By the Hopf lemma for  trace (see \cite[Lemma 2.3.23]{JM}),
we have 
\[\sum_{p,q}(-1)^{p+q}{\rm tr}\, E^{p,q}_{r}(f)=\sum_{p,q}(-1)^{p+q}{\rm tr}\, E^{p,q}_{r-1}(f).
\]
and inductively for $s\le r$, we have
\[\sum_{p,q}(-1)^{p+q}{\rm tr}\, E^{p,q}_{r}(f)=\sum_{p,q}(-1)^{p+q}{\rm tr}\, E^{p,q}_{s}(f).
\]
Hence the lemma follows.
\end{proof}

Let $A^{\ast}$ be a finite-dimensional graded commutative $\C$-algebra.
\begin{definition}
$A^{\ast}$ is of degree $n$ Poincar\'e duality type (n-PD-type) if the following conditions hold:

$\bullet$ $A^{\ast<0}=0$ and $A^{0}=\R 1$ where $1$ is the identity element of $A^{\ast}$.

$\bullet$ For some positive integer $n$, $A^{\ast>n}=0$ and $A^{n}=\R v$ for $v\not=0$.

$\bullet$ For any $0<i<n$ the bi-linear map $A^{i}\times A^{n-i}\ni (\alpha,\beta)\mapsto \alpha\cdot \beta\in A^{n}$ is non-degenerate.
Hence we have an isomorphism $D_{i}:A^{n-i}\cong (A^{i})^{\ast}$ where $(A^{i})^{\ast}$ is the dual space of $A^{i}$.

\end{definition}

Let $A^{\ast}_{1}$ and $A_{2}^{\ast}$ be  finite-dimensional graded commutative $\R$-algebras of n-PD-type and $f^{\ast}:A_{2}^{\ast}\to A_{1}^{\ast}$ and $g^{\ast}:A_{2}^{\ast}\to A_{1}^{\ast}$  graded linear maps.
By isomorphisms
$:A_{1}^{i}\cong (A_{1}^{n-i})^{\ast}$ and $:A_{2}^{i}\cong (A_{2}^{n-i})^{\ast}$, we have the map $D^{i}(g^{\ast}):A_{1}^{i}\to A_{2}^{i}$ which corresponds to the dual map $(A_{1}^{n-i})^{\ast}\to (A_{2}^{n-i})^{\ast}$ of $g^{n-i}$.
Define the map $\theta^{i}(f,g)=D^{i}(g^{\ast})\circ f^{i}$.
We denote \[L(f,g)=L\left(\theta^{i}(f,g)\right).\]

For two compact oriented manifolds $M_{1}$ and $M_{2}$ of the same dimension, for two continuous maps $f,g:M_{1}\to M_{2}$, we consider the induced maps
$H^{\ast}(f), H^{\ast}(g): H^{\ast}(M_{2})\to H^{\ast}(M_{1})$.
Then the Lefschetz coincidence number $L(f,g)$ is defined as
$L(f,g)=L\left(H^{\ast}(f), H^{\ast}(g)\right)$.

\begin{definition}
 A differential graded algebra (DGA) is a graded  commutative $\R$-algebra $A^{\ast}$  with a differential $d$ of degree +1 so that $d\circ d=0$ and $d(\alpha\cdot \beta)=d\alpha\cdot \beta+(-1)^{p}\alpha\cdot d\beta$ for $\alpha\in A^{p}$.
\end{definition}

\begin{definition}
A finite-dimensional DGA $(A^{\ast},d)$ is of $n$-PD-type if  the following conditions hold:

$\bullet$  $A^{\ast}$ is  a finite-dimensional graded $\R$-algebra of $n$-PD-type.

$\bullet$ $dA^{n-1}=0$ and $dA^{0}=0$.
\end{definition}

As similar to the  Poincar\'e duality of the cohomology of compact Riemannian manifold, we can prove the following lemma.

\begin{lemma}[\cite{KSP}]\label{rm}
Let $(A^{\ast},d)$ be a finite dimensional DGA of $n$-PD-type.
Then the cohomology algebra $H^{\ast}(A)$ is   a finite dimensional graded commutative $\R$-algebra of $n$-PD-type.
\end{lemma}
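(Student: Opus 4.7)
The plan is to verify the three axioms of $n$-PD-type for $H^{\ast}(A)$ directly from those of $A^{\ast}$, using the hypotheses $dA^{0}=0$ and $dA^{n-1}=0$.

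\textbf{Degree bounds and the extremes.} Since $A^{i}=0$ for $i<0$ or $i>n$, the same is true of $H^{i}(A)$. The condition $dA^{0}=0$ gives $H^{0}(A)=A^{0}=\R 1$. At the top, $A^{n+1}=0$ forces every element of $A^{n}$ to be a cocycle, and $dA^{n-1}=0$ leaves no coboundaries in degree $n$, so $H^{n}(A)=A^{n}=\R v$; in particular $[v]\neq 0$.

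\textbf{Integration by parts.} Let $\langle\cdot,\cdot\rangle:A^{i}\times A^{n-i}\to\R$ be the pairing defined by $\alpha\cdot\beta=\langle\alpha,\beta\rangle\, v$. For $\alpha\in A^{i}$ and $\beta\in A^{n-i-1}$ the product $\alpha\cdot\beta$ lies in $A^{n-1}$, and the Leibniz rule together with $dA^{n-1}=0$ gives
\begin{equation*}
0=d(\alpha\cdot\beta)=d\alpha\cdot\beta+(-1)^{i}\alpha\cdot d\beta,
\end{equation*}
so $\langle d\alpha,\beta\rangle=-(-1)^{i}\langle\alpha,d\beta\rangle$. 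This is the algebraic analogue of Stokes' formula, and it will make $d$ skew-adjoint for the PD pairing.

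\textbf{Non-degeneracy on cohomology.} Fix $0<i<n$. I must show that if a cocycle $\alpha\in Z^{i}$ satisfies $\langle\alpha,\beta\rangle=0$ for every $\beta\in Z^{n-i}$, then $\alpha$ is a coboundary. Define $F:A^{n-i}\to\R$ by $F(\beta)=\langle\alpha,\beta\rangle$. By assumption $F$ vanishes on $Z^{n-i}$, so $F$ descends to a linear functional on $A^{n-i}/Z^{n-i}$, which via $d$ is isomorphic to $B^{n-i+1}=dA^{n-i}\subseteq A^{n-i+1}$. Thus there is a well-defined linear functional $\hat F$ on $B^{n-i+1}$ with $\hat F(d\beta)=F(\beta)$; extend it to a linear functional $\tilde F$ on all of $A^{n-i+1}$. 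The non-degenerate pairing $A^{i-1}\times A^{n-i+1}\to A^{n}$ of $A^{\ast}$ supplies a unique $\gamma\in A^{i-1}$ with $\langle\gamma,\eta\rangle=\tilde F(\eta)$ for every $\eta\in A^{n-i+1}$. Applying integration by parts to $\gamma$ and an arbitrary $\beta\in A^{n-i}$ one finds
\begin{equation*}
\langle d\gamma,\beta\rangle=-(-1)^{i-1}\langle\gamma,d\beta\rangle=(-1)^{i}\tilde F(d\beta)=(-1)^{i}F(\beta)=(-1)^{i}\langle\alpha,\beta\rangle.
\end{equation*}
Since this holds for every $\beta\in A^{n-i}$, the non-degeneracy of the pairing of $A^{\ast}$ forces $\alpha=(-1)^{i}d\gamma$, whence $[\alpha]=0$ in $H^{i}(A)$. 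Running the symmetric argument in degree $n-i$ (or invoking the same statement with the roles swapped) shows that the induced pairing $H^{i}(A)\times H^{n-i}(A)\to H^{n}(A)=\R[v]$ is non-degenerate.

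\textbf{Main obstacle.} The routine but delicate point is the bookkeeping of signs in the integration-by-parts formula and the verification that the Hahn--Banach-style extension step really does reproduce $F$ on the image $B^{n-i+1}$; no analytic input (e.g.\ Hodge theory) is required because both $A^{i}$ and $A^{n-i+1}$ are finite dimensional, so extending a linear functional from a subspace is purely linear algebra. Once the adjointness formula is in hand, the proof is essentially the algebraic translation of Poincaré duality on a compact oriented manifold.
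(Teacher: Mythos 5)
Your argument is correct: the adjointness formula $\langle d\alpha,\beta\rangle=-(-1)^{i}\langle\alpha,d\beta\rangle$ (valid because the products involved land in $A^{n-1}$ and $dA^{n-1}=0$), combined with the duality isomorphism $A^{i-1}\cong (A^{n-i+1})^{\ast}$, does force any cocycle pairing trivially with $Z^{n-i}$ to be exact, the degree $0$ and degree $n$ computations using $dA^{0}=0$ and $dA^{n-1}=0$ are right, and the only unstated point --- that for $i=1$ the pairing $A^{0}\times A^{n}\to A^{n}$ is nondegenerate although the definition only requires this for $0<i<n$ --- is immediate from $A^{0}=\R 1$ and $A^{n}=\R v$. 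The paper gives no proof of this lemma (it defers to \cite{KSP}, describing it as the algebraic analogue of Poincar\'e duality for compact manifolds), and your finite-dimensional linear-algebra verification is exactly that standard argument.
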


Then the following lemma follows from Lemma \ref{rm} inductively.

\begin{lemma}\label{fffi}
Let $A^{\ast}$ be a bounded filtered differential graded algebra.
Suppose  that:
\begin{itemize}
\item The cohomology $H^{\ast}(A^{\ast})$ is a finite dimensional graded algebra of $n$-PD-type.
\item For some integer $s$,   the total complex  $({\rm Tot}^{\ast}\,E_{s}^{\ast,\ast}(A^{\ast}), d_{s})$ of the  $E_{s}$-term of the spectral sequence is a finite dimensional graded algebra of $n$-PD-type.
\end{itemize}
Then for each $r\ge s$, the total complex  $({\rm Tot}^{\ast}\,E_{r}^{\ast,\ast}(\g), d_{r})$ of the  $E_{r}$-term of the spectral sequence is also a graded algebra of $n$-PD-type.
\end{lemma}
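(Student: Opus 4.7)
The plan is to induct on $r \geq s$, propagating the $n$-PD-type property from one page of the spectral sequence to the next via Lemma \ref{rm}. The base case $r=s$ is exactly the hypothesis. For the inductive step, assume $({\rm Tot}^{\ast} E_{r}^{\ast,\ast}(A^{\ast}), d_{r})$ is a graded algebra of $n$-PD-type; the goal is to show the same at page $r+1$. Since the spectral sequence of a filtered DGA is multiplicative, ${\rm Tot}^{\ast} E_{r+1}$ coincides with $H^{\ast}({\rm Tot}^{\ast}E_r, d_r)$ as a graded algebra, so the strategy is to apply Lemma \ref{rm} to the DGA $({\rm Tot}^{\ast}E_r, d_r)$.

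To apply Lemma \ref{rm} legitimately, the induction hypothesis must be upgraded from ``graded algebra of $n$-PD-type'' to ``DGA of $n$-PD-type''; i.e.\ the additional conditions $d_r({\rm Tot}^{0}E_r)=0$ and $d_r({\rm Tot}^{n-1}E_r)=0$ must be verified. The first is automatic because ${\rm Tot}^{0}E_r = \R \cdot 1$ and $d_r$ is a derivation.

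The top-degree vanishing $d_r({\rm Tot}^{n-1}E_r)=0$ is the main obstacle. I would verify it by a dimension count exploiting the convergence of the spectral sequence. Because $A^{\ast}$ is bounded filtered, the spectral sequence converges to the associated graded of $H^{\ast}(A^{\ast})$, so $\dim {\rm Tot}^{n}E_{\infty} = \dim H^{n}(A^{\ast}) = 1$. Since each page is a subquotient of the previous one,
\[
1 = \dim {\rm Tot}^{n}E_{\infty} \leq \dim {\rm Tot}^{n}E_{r+1} \leq \dim {\rm Tot}^{n}E_{r} = 1,
\]
forcing $\dim {\rm Tot}^{n}E_{r+1} = 1$. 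Since ${\rm Tot}^{n+1}E_r = 0$ by the $n$-PD-type hypothesis, $d_r$ vanishes automatically on ${\rm Tot}^{n}E_r$ and therefore ${\rm Tot}^{n}E_{r+1} = {\rm Tot}^{n}E_r / d_r({\rm Tot}^{n-1}E_r)$; the dimensional equality then forces $d_r({\rm Tot}^{n-1}E_r)=0$.

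With both differential conditions verified, Lemma \ref{rm} yields that ${\rm Tot}^{\ast}E_{r+1} = H^{\ast}({\rm Tot}^{\ast}E_r, d_r)$ is a graded algebra of $n$-PD-type, completing the induction. Aside from the dimension-count argument for the top-degree vanishing of $d_r$, the proof is a mechanical iteration of Lemma \ref{rm}.
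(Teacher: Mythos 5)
Your proposal is correct and follows essentially the same route as the paper: verify that the total complex of each page is a DGA of $n$-PD-type (the vanishing of $d_r$ on ${\rm Tot}^{0}$ and ${\rm Tot}^{n-1}$ being forced by the one-dimensionality of the degree $0$ and degree $n$ pieces together with convergence to $H^{\ast}(A^{\ast})$), then apply Lemma \ref{rm} and iterate. Your dimension-count justification of $d_r({\rm Tot}^{n-1}E_r)=0$ just makes explicit what the paper asserts in one line.
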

\begin{proof}
Since we have $H^{0}(A^{\ast})\cong \R$, $H^{n}(A^{\ast})\cong \R$, ${\rm Tot}^{0}\,E_{s}^{\ast,\ast}(A^{\ast})\cong \R$ and ${\rm Tot}^{n}\,E_{s}^{\ast,\ast}(A^{\ast})\cong \R$, 
we have $d_{s}( {\rm Tot}^{0}\,E_{s}^{\ast,\ast}(A^{\ast}))=0$ and $d_{s}( {\rm Tot}^{n-1}\,E_{s}^{\ast,\ast}(A^{\ast}))=0$.
Hence  the total complex  $({\rm Tot}^{\ast}\,E_{s}^{\ast,\ast}(A^{\ast}), d_{s})$ of the  $E_{s}$-term  is  a DGA of $n$-PD-type and  by Lemma \ref{rm}, the total complex  ${\rm Tot}^{\ast}\,E_{s+1}^{\ast,\ast}(A^{\ast})$ is  a graded algebra of $n$-PD-type.
\end{proof}

By Lemma \ref{spelm}, we have:
\begin{lemma}\label{spel}
Let $A^{\ast}_{1}$ and $A_{2}^{\ast}$ be bounded filtered DGAs and $f^{\ast}, g^{\ast}:A_{2}^{\ast}\to A_{1}^{\ast}$  morphisms of filtered DGA with the induced maps $H^{\ast}(f), H^{\ast}(g):H^{\ast}(A_{2}^{\ast})\to H^{\ast}(A_{1}^{\ast})$.
Consider the spectral sequences $E_{r}^{\ast,\ast}(A_{1})$ and $E_{r}^{\ast,\ast}(A_{2})$ of $A^{\ast}_{1}$ and $A_{2}^{\ast}$ 
and the maps $E_{r}^{\ast,\ast}(f), E_{r}^{\ast,\ast}(g):E_{r}^{\ast,\ast}(A_{2})\to E_{r}^{\ast,\ast}(A_{1})$ induced by $f,g$.

We suppose that:
\begin{itemize}
\item The cohomologies $H^{\ast}(A^{\ast}_{1})$ and $H^{\ast}(A^{\ast}_{2})$ are finite dimensional graded algebra of $n$-PD-type.
\item For some integer $s$,  the total complexes ${\rm Tot}^{\ast}\,E_{r}^{\ast,\ast}(A_{1})$ and ${\rm Tot}^{\ast}\,E_{r}^{\ast,\ast}(A_{2})$ of $E_{r}$-terms are finite dimensional graded algebras of $n$-PD-type.
Hence inductively the lemma follows.
\end{itemize}
Then for each $r\ge s$, we have
\[L(H^{\ast}(f), H^{\ast}(g))=L({\rm Tot}^{\ast}E_{r}^{\ast,\ast}(f), {\rm Tot}^{\ast}E_{r}^{\ast,\ast}(g)).
\]

\end{lemma}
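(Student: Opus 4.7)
The plan is to adapt the proof of Lemma~\ref{spelm} from a single graded map to the bilinear coincidence setting. By Lemma~\ref{fffi} applied inductively, each total complex ${\rm Tot}^{\ast}\,E_{r}^{\ast,\ast}(A_{j})$ is a DGA of $n$-PD-type for every $r\ge s$, and so is $H^{\ast}(A_{j}^{\ast})$; hence the Poincar\'e isomorphism $D_{r}$ exists at each page, and the graded endomorphism
\[
\theta_{r}\;:=\;D_{r}(E_{r}(g)^{\ast})\circ E_{r}(f)
\]
of ${\rm Tot}^{\ast}\,E_{r}^{\ast,\ast}(A_{2})$ is well defined, with $L({\rm Tot}^{\ast}E_{r}(f),{\rm Tot}^{\ast}E_{r}(g))=L(\theta_{r})$ by the definition of the coincidence number; similarly $L(H^{\ast}(f),H^{\ast}(g))=L(\theta_{\infty})$.

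First I would verify compatibility of the PD-structures along the spectral sequence. The multiplication on $A_{j}^{\ast}$ and the orientation of its top class pass to each $E_{r}^{\ast,\ast}$-page via the filtration and iterated $d_{r}$-cohomology, so the Poincar\'e isomorphisms $D_{r}$ on successive pages fit into commutative diagrams reflecting this descent. Since $f,g$ are morphisms of filtered DGAs, $E_{r}(f)$ and $E_{r}(g)$ commute with the multiplications and with each differential $d_{r}$. This yields the naturality of $\theta_{r}$ under passage from $E_{r}$ to $E_{r+1}$, and eventually to $H^{\ast}$.

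Second, I would lift the two trace identities used in Lemma~\ref{spelm} to the coincidence endomorphisms $\theta_{r}$. The subquotient identity
\[
\sum_{p+q=k}{\rm tr}\bigl(E_{r}^{p,q}(\theta_{r})\bigr)\;=\;{\rm tr}\bigl(H^{k}(\theta_{\infty})\bigr)
\]
follows from additivity of trace together with the PD-compatibility of the previous step. The Hopf-lemma identity
\[
\sum_{p,q}(-1)^{p+q}{\rm tr}\bigl(E_{r}^{p,q}(\theta_{r})\bigr)\;=\;\sum_{p,q}(-1)^{p+q}{\rm tr}\bigl(E_{r-1}^{p,q}(\theta_{r-1})\bigr)
\]
requires a bilinear refinement, which rests on the fact that the PD-pairing on each $E_{r}$-page is $d_{r}$-compatible in the sense underlying Lemma~\ref{rm}. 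Combining the two identities and iterating from page $r$ to the abutment $H^{\ast}$ yields the equality claimed.

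The main obstacle will be the bilinear Hopf lemma: unlike $E_{r}(f)$ and $E_{r}(g)$ separately, the composite $\theta_{r}$ is not in general a chain map with respect to $d_{r}$, so Lemma~\ref{spelm} does not apply verbatim. The remedy is to exploit the non-degeneracy of the PD-pairing on each $E_{r}$-page (guaranteed by Lemma~\ref{fffi}) to reformulate the alternating-sum trace as a single-map trace on the PD-dual side, invoke the original Hopf lemma there, and translate back through $D_{r}$. Once this bilinear refinement is in place, the chain of identities assembles exactly as in the proof of Lemma~\ref{spelm}.
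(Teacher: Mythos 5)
Your overall plan coincides with the paper's (very terse) argument: the paper obtains this lemma by rerunning the proof of Lemma \ref{spelm} for the coincidence endomorphisms $\theta_{r}=D_{r}(E_{r}(g))\circ E_{r}(f)$, with Lemma \ref{fffi} supplying the $n$-PD structure on ${\rm Tot}^{\ast}E_{r}^{\ast,\ast}(A_{j})$ for every $r\ge s$ so that these endomorphisms and the numbers $L(\theta_{r})$ are defined page by page. You also correctly locate the one point where Lemma \ref{spelm} is not applicable verbatim: $\theta_{r}$ is not a $d_{r}$-chain map, so both the page-to-page (Hopf-type) identity and the comparison of the abutment with $E_{\infty}$ need bilinear versions.

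The gap is in your remedy for exactly that step. Passing to ``the PD-dual side'' does not produce a chain map to which the original Hopf lemma applies: if $\langle\cdot,\cdot\rangle$ denotes the pairing given by multiplication into the top degree, then $\langle\theta_{r}^{i}(f,g)x,y\rangle=\langle E_{r}(f)x,E_{r}(g)y\rangle=\langle x,\theta_{r}^{n-i}(g,f)y\rangle$, so the adjoint of $\theta_{r}(f,g)$ is just $\theta_{r}(g,f)$ in complementary degree --- again a duality-twisted composite, not a map commuting with $d_{r}$; all you recover is the symmetry ${\rm tr}\,\theta^{i}(f,g)={\rm tr}\,\theta^{n-i}(g,f)$. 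What actually makes the bilinear Hopf identity work is the PD structure of the page itself: since ${\rm Tot}^{\ast}E_{r}^{\ast,\ast}(A_{j})$ is a DGA of $n$-PD-type (Lemma \ref{fffi}), $d_{r}$ kills the degree $n-1$ part, hence cocycles pair to zero with coboundaries; choosing bases adapted to $B\subset Z\subset{\rm Tot}^{\ast}E_{r}$ and writing ${\rm tr}\,\theta_{r}^{i}=\sum_{\alpha}\langle E_{r}(f)e_{\alpha},E_{r}(g)\check e_{\alpha}\rangle$, the contributions not coming from cohomology cancel between adjacent degrees of the alternating sum because $E_{r}(f)$ and $E_{r}(g)$ do commute with $d_{r}$. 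A parallel argument (not covered by your appeal to ``additivity of trace'') is needed to compare $H^{\ast}$ with $E_{\infty}$: one must check that $D(H(g))\circ H(f)$ preserves the filtration of $H^{\ast}(A_{2})$, which follows from $F^{p}\cdot F^{p'}\subset F^{p+p'}$ in top degree together with nondegeneracy of the induced pairing on the graded pieces, so that its trace is the sum of the traces on ${\rm gr}^{p}H^{\ast}$, i.e.\ equals $L$ on ${\rm Tot}^{\ast}E_{\infty}$. With these two points supplied your outline closes; as written, the crucial bilinear step rests on a dualization trick that does not work.
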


\section{The Ha-Lee-Penninckx formula}
Let $V$ be a $n$-dimensional vector space.
Consider the exterior algebra $\bigwedge V$.
Then $\bigwedge V$ is a  finite-dimensional graded commutative $\C$-algebras of n-PD-type.
In \cite{HLP}, Ha-Lee-Penninckx showed:
\begin{theorem}[\cite{HLP}]\label{HL}
Let $V_{1}$, $V_{2}$ be  $n$-dimensional vector spaces and $\Phi, \Psi:V_{2}\to V_{1}$ linear maps.
Consider the exterior algebras $\bigwedge V_{1}$ and $\bigwedge V_{2}$ and the extended map
$\wedge \Phi, \wedge \Psi:\bigwedge V_{2}\to \bigwedge V_{1}$.
Take representation matrices $A$, $B$ of  $\Phi$ and $\Psi $ associated with basis of $V_{1}$ and $V_{2}$.
Then we have
\[L(\wedge \Phi, \wedge \Psi)={\rm det}(A-B).
\]
\end{theorem}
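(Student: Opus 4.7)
The plan is a direct computational proof: I express both $L(\wedge\Phi,\wedge\Psi)=\sum_i(-1)^i\operatorname{tr}(\theta^i)$ and $\det(A-B)$ as alternating sums, indexed by subsets $S\subseteq\{1,\ldots,n\}$, of pairings of complementary minors of $A$ and $B$, and then match them term by term. Choose bases $\{e_j\}$ of $V_1$ and $\{f_j\}$ of $V_2$ in which $\Phi$ and $\Psi$ have matrices $A=(a_{jk})$ and $B=(b_{jk})$. For an increasingly ordered subset $I\subseteq\{1,\ldots,n\}$ of size $k$ let $e_I=e_{i_1}\wedge\cdots\wedge e_{i_k}$, with $f_I$ analogous, giving bases of $\bigwedge^k V_1$ and $\bigwedge^k V_2$. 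Write $A[J,I]$ for the $k\times k$ minor with rows $J$ and columns $I$. Then $(\wedge\Phi)^i(f_I)=\sum_{|J|=i}A[J,I]\,e_J$.

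The first main step is to compute the Poincar\'e-duality adjoint $D^i((\wedge\Psi)^\ast)\colon\bigwedge^i V_1\to\bigwedge^i V_2$ in these bases. Trivializing $\bigwedge^n V_k\cong\R$ by the top forms $e_{[n]},f_{[n]}$, the defining identity
\[
D^i((\wedge\Psi)^\ast)(e_J)\wedge f_K \;=\; e_J\wedge(\wedge^{n-i}\Psi)(f_K)\qquad(|K|=n-i)
\]
combined with the fact that $f_M\wedge f_K$ is nonzero only for $M=K^c$ (and then equals $\operatorname{sgn}(K^c,K)$, the sign of the shuffle assembling the ordered tuple $(K^c,K)$ into $(1,\ldots,n)$) forces
\[
D^i((\wedge\Psi)^\ast)(e_J)=\sum_{|M|=i}\operatorname{sgn}(J,J^c)\operatorname{sgn}(M,M^c)\,B[J^c,M^c]\,f_M.
\]
Composing with $(\wedge\Phi)^i$ and extracting the diagonal coefficients then yields
\[
\operatorname{tr}(\theta^i)=\sum_{|I|=|J|=i}\operatorname{sgn}(I,I^c)\operatorname{sgn}(J,J^c)\,A[J,I]\,B[J^c,I^c].
\]

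Independently, I expand $\det(A-B)$ by multilinearity on each column, writing the $j$-th column of $A-B$ as $a_j+(-b_j)$, which gives
\[
\det(A-B)=\sum_{S\subseteq\{1,\ldots,n\}}(-1)^{n-|S|}\det(M_S),
\]
where $M_S$ has the $j$-th column of $A$ for $j\in S$ and of $B$ otherwise. The Laplace expansion of $\det(M_S)$ along the columns in $S$ produces
\[
\det(M_S)=\sum_{|J|=|S|}(-1)^{\sigma(J)+\sigma(S)}\,A[J,S]\,B[J^c,S^c],\qquad \sigma(X)=\sum_{x\in X}x.
\]
The combinatorial identity $\operatorname{sgn}(I,I^c)=(-1)^{\sigma(I)-k(k+1)/2}$, together with the parity $k(k+1)\equiv 0\pmod 2$, gives the key formula $\operatorname{sgn}(I,I^c)\operatorname{sgn}(J,J^c)=(-1)^{\sigma(I)+\sigma(J)}$. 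Identifying $I$ with $S$ of size $i$ then yields $\operatorname{tr}(\theta^i)=\sum_{|S|=i}\det(M_S)$, and substituting into $L(\wedge\Phi,\wedge\Psi)=\sum_{i=0}^n(-1)^i\operatorname{tr}(\theta^i)$ reconstitutes the multilinearity expansion of $\det(A-B)$ after normalizing by the orientation convention on $\bigwedge^n V_1$ and $\bigwedge^n V_2$.

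The main obstacle is the careful sign bookkeeping. Three distinct sign conventions intervene simultaneously---the shuffle signs from the Poincar\'e-duality identification, the cofactor signs from the Laplace expansion, and the multilinearity expansion signs---and they must be aligned precisely. The key algebraic identity $\operatorname{sgn}(I,I^c)\operatorname{sgn}(J,J^c)=(-1)^{\sigma(I)+\sigma(J)}$ is the combinatorial heart of the matching; once it is established, the rest is a routine term-by-term substitution between the two expansions.
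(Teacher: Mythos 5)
Your overall strategy is reasonable, and since the paper simply cites this theorem from \cite{HLP} without reproducing a proof, a self-contained expansion argument like yours is a legitimate route; moreover your intermediate formulas are all correct (the basis formula for $D^{i}((\wedge\Psi)^{\ast})$, the trace formula $\mathrm{tr}\,\theta^{i}=\sum_{|I|=|J|=i}\mathrm{sgn}(I,I^{c})\mathrm{sgn}(J,J^{c})\,A[J,I]\,B[J^{c},I^{c}]$, the generalized Laplace expansion, and the sign identity $\mathrm{sgn}(I,I^{c})\mathrm{sgn}(J,J^{c})=(-1)^{\sigma(I)+\sigma(J)}$). The gap sits exactly where you write ``after normalizing by the orientation convention.'' Your own formulas give $\mathrm{tr}\,\theta^{i}=\sum_{|S|=i}\det(M_{S})$ and hence
\[
L(\wedge\Phi,\wedge\Psi)=\sum_{S}(-1)^{|S|}\det(M_{S})=\det(B-A)=(-1)^{n}\det(A-B),
\]
whereas your multilinearity expansion is $\det(A-B)=\sum_{S}(-1)^{n-|S|}\det(M_{S})$. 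These differ by $(-1)^{n}$, and there is no normalization left to absorb it: both $L$ (through the trivializations of $\bigwedge^{n}V_{1}$ and $\bigwedge^{n}V_{2}$ by the top wedges of the chosen bases) and the matrices $A,B$ are computed in the same fixed bases, so the sign is determined. Concretely, for $n=1$ with $\Phi=a$, $\Psi=b$ one finds $\mathrm{tr}\,\theta^{0}=b$ and $\mathrm{tr}\,\theta^{1}=a$, so $L(\wedge\Phi,\wedge\Psi)=b-a=-\det(A-B)$; thus the final matching step as stated fails for odd $n$.

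What this means is that, with the paper's convention $\theta^{i}(f,g)=D^{i}(g^{\ast})\circ f^{i}$, the identity your computation actually establishes is $L(\wedge\Phi,\wedge\Psi)=\det(B-A)$; the form $\det(A-B)$ corresponds to the opposite duality convention ($\theta^{i}=D^{i}(f^{\ast})\circ g^{i}$), the two being related by the standard symmetry $L(f,g)=(-1)^{n}L(g,f)$. To close the argument you should either carry the factor $(-1)^{n}$ explicitly and state which convention is in force, or interchange the roles of $\Phi$ and $\Psi$ in the duality step. For the use made of this theorem in the paper only $\vert L(f,g)\vert$ matters (Wong's inequality), so the discrepancy is harmless for the applications, but as a proof of the displayed identity the sign must be resolved rather than attributed to an unspecified orientation choice.
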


\section{Lie algebra cohomology}
Let $\g$ be a $n$-dimensional solvable  Lie algebra.
We consider the DGA $\bigwedge \g^{\ast}$  with the differential $d$ which is the dual to the Lie bracket of $\g$.
We suppose that $\g$ is unimodular.
Then $\bigwedge \g^{\ast}$ is a DGA of $n$-PD-type.
Take a basis $X_{1},\dots,\ X_{n}$ of $\g$ and its dual basis $x_{1},\dots , x_{n}$ of $\g^{\ast}$.

Let $\frak n$ be a ideal of $\g$.
We consider the spectral sequence $(E_{r}^{p,q}(\g), d_{r})$ given by the extension $0\to \frak n\to \g\to \g/\frak n\to 0$.
This  spectral sequence is given by the filtration
\[F^{p}\bigwedge ^{p+q}\g^{\ast}=\{\omega\in \bigwedge^{p+q} \g^{\ast}\vert \omega(Y_{1},\dots, Y_{p+1})=0 \,\, {\rm for}\,\, Y_{1},\dots, Y_{p+1}\in\n\}.
\]
We have 
\[E_{0}^{\ast,\ast}(\g)=\bigwedge (\g/\frak n)^{\ast}\otimes \bigwedge\frak n^{\ast}\]
with the differential $d_{0}= 1\otimes d_{\bigwedge\frak n^{\ast}}$,
\[E_{1}^{\ast,\ast}(\g)=\bigwedge (\g/\frak n)^{\ast}\otimes H^{\ast}(\frak n)\]
whose differential $d_{1}$ is the differntial on $\bigwedge (\g/\frak n)^{\ast}\otimes H^{\ast}(\frak n)$ twisted by the  action of $\g/\n$ on $ H^{\ast}(\frak n)$
and 
\[E_{2}^{\ast,\ast}(\g)=H^{\ast}\left(\g, H^{\ast}(\frak n)\right).\]


Since we suppose that $\g$ is unimodular, we have $d\left(\bigwedge^{n-1} \g^{\ast}\right)=0 $ and so  $\bigwedge \g^{\ast}$ is a finite dimensional DGA of $n$-PD-type.
By Lemma \ref{fffi},  the total complex  $({\rm Tot}^{\ast}\,E_{r}^{\ast,\ast}(\g), d_{r})$ of each  $E_{r}$-term of the spectral sequence is also a graded algebra of $n$-PD-type.

\section{de Rham Cohomology  solvamanifolds with Mostow conditions}\label{cos}

Let $G$ be a simply connected solvable Lie group with a lattice $\Gamma$.
We suppose the Mostow condition: $ {\rm Ad}(G)$ and $ {\rm Ad}(\Gamma)$ have the same Zariski-closure in $ {\rm Aut}(\g_{\C})$.
Then we have:
\begin{proposition}[\cite{OB}]\label{2lat}
Discrete subgroups $[\Gamma, \Gamma]$ and $\Gamma\cap [G,G]$ are lattices in the Lie group $[G,G]$ and the subgroup $\Gamma[G,G]$ is closed in $G$.
\end{proposition}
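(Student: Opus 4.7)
The plan is to reduce all three claims to a single key statement---discreteness of the image of $\Gamma$ in the abelianization of $G$---and then to establish that discreteness using the Mostow condition.

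\emph{Setup.} Since $G$ is simply connected solvable, $[G,G]$ is closed, connected, simply connected and nilpotent, so $A:=G/[G,G]$ is a simply connected abelian Lie group, hence $A\cong\R^k$ with $k=\dim(\g/[\g,\g])$. Write $\pi\colon G\to A$ for the projection.

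\emph{Reduction.} Assume for the moment that $\pi(\Gamma)$ is discrete in $A$. Then $\Gamma[G,G]=\pi^{-1}(\pi(\Gamma))$ is closed in $G$, and the compact space $G/\Gamma$ surjects continuously onto the Hausdorff quotient $A/\pi(\Gamma)$, making $\pi(\Gamma)\cong\Z^k$ a lattice in $A$. The induced map $G/\Gamma\to A/\pi(\Gamma)$ has fibre $[G,G]/(\Gamma\cap[G,G])$, a closed subset of the compact space $G/\Gamma$; hence the fibre is compact and $\Gamma\cap[G,G]$ is a lattice in $[G,G]$. For $[\Gamma,\Gamma]$, set $\Lambda:=\Gamma\cap[G,G]$; since $[\Gamma,\Gamma]\subseteq\Lambda$ and $\Lambda$ is a lattice in $[G,G]$, it suffices that $[\Gamma,\Gamma]$ have finite index in $\Lambda$. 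Since $\Gamma$ is finitely generated, $\Gamma/[\Gamma,\Gamma]$ is finitely generated abelian; it surjects onto $\Gamma/\Lambda\cong\Z^k$ with kernel $\Lambda/[\Gamma,\Gamma]$, and by Mostow's cohomology theorem (recalled in the introduction) $\dim H^1(G/\Gamma;\R)=\dim H^1(\g)=k$, so the rank of $\Gamma/[\Gamma,\Gamma]$ equals $k$. The kernel therefore has rank zero and, being finitely generated abelian, is finite.

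\emph{Key step.} It remains to prove that $\pi(\Gamma)$ is discrete in $A$; this is where the Mostow hypothesis is essential. Without it the conclusion can fail: for the non-Mostow group $G=\R\ltimes_\rho \C$ with $\rho(t)z=e^{it}z$ and the lattice $\Gamma=2\pi\Z\ltimes\Lambda$ one has $[G,G]=\C$ yet $[\Gamma,\Gamma]=\{e\}$, because the generator $2\pi$ of $\pi(\Gamma)$ acts trivially on the fibre. Under the Mostow assumption $\overline{\mathrm{Ad}(\Gamma)}^{\mathrm{Zar}}=\overline{\mathrm{Ad}(G)}^{\mathrm{Zar}}$ such twisting is excluded: combining the Zariski density of $\mathrm{Ad}(\Gamma)$ with Mostow's theorem that $\Gamma N$ is closed and $\Gamma\cap N$ is a lattice in the nilradical $N\supseteq[G,G]$ forces $\pi(\Gamma)$ to be discrete in the further quotient $A$. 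This key step is the main obstacle; all remaining pieces are routine fibration and rank arguments.
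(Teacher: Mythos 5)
The paper does not prove this proposition at all: it is quoted from Baues--Klopsch \cite{OB}, so the only benchmark is whether your argument is complete on its own, and it is not. Your reduction (discreteness of $\pi(\Gamma)$ in $A=G/[G,G]$ implies closedness of $\Gamma[G,G]$, cocompactness of $\pi(\Gamma)$, the lattice property of $\Gamma\cap[G,G]$, and then the finite-index/rank argument for $[\Gamma,\Gamma]$ via $b_{1}(G/\Gamma)=\dim H^{1}(\g)$) is reasonable, although it leans on Mostow's cohomology isomorphism as an input. But the ``key step'' is a genuine gap: the sentence ``combining the Zariski density of ${\rm Ad}(\Gamma)$ with Mostow's theorem that $\Gamma N$ is closed and $\Gamma\cap N$ is a lattice in the nilradical forces $\pi(\Gamma)$ to be discrete'' is an assertion, not an argument. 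The classical results you invoke ($\Gamma\cap N$ a lattice in $N$, $\Gamma N$ closed) hold for every lattice in a simply connected solvable group, with no Mostow hypothesis, so they cannot by themselves yield the conclusion; the entire content of the proposition is precisely the descent from the nilradical $N$ to the smaller normal subgroup $[G,G]\subset N$, i.e.\ showing that the image of $\Gamma\cap N$ in $N/[G,G]$ (equivalently $\pi(\Gamma)$ in $A$) is discrete, and you give no mechanism for how the equality of Zariski closures enters there. In \cite{OB} this is done through the algebraic hull of $G$ and $\Gamma$; some such construction (or a direct density argument) is unavoidable and is missing here.

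A secondary problem: your counterexample does not illustrate what you claim. For $G=\R\ltimes_{\rho}\C$ with $\rho(t)z=e^{it}z$ and $\Gamma=2\pi\Z\ltimes\Lambda$, the image $\pi(\Gamma)=2\pi\Z$ \emph{is} discrete in $G/[G,G]\cong\R$, $\Gamma[G,G]$ is closed, and $\Gamma\cap[G,G]=\Lambda$ is a lattice in $[G,G]=\C$; what fails is only the statement about $[\Gamma,\Gamma]=\{e\}$, and it fails exactly because $b_{1}(G/\Gamma)=3\neq 1=\dim H^{1}(\g)$. So this example shows that your ``routine'' rank argument is the part that breaks without the Mostow condition, while leaving open whether and how the condition forces the discreteness you deferred. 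As it stands, the proposal proves the proposition only modulo its hardest ingredient; either supply a genuine argument for the discreteness of $\pi(\Gamma)$ under the Mostow condition or, as the paper does, cite \cite{OB}.
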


Set $[G,G]=N$, $G/N=A$ and $\frak n$ the Lie algebra of $N$ and $\frak a$ the Lie algebra of $A$.
By Proposition \ref{2lat}, we have the fiber bundle structure
\[N/\Gamma\cap N\to G/\Gamma\to G/\Gamma N
\]
of the solvmanifold $ G/\Gamma$  with base space torus $G/\Gamma N=A/p(\Gamma)$  and fiber nilmanifold  $N/\Gamma\cap N$ where $p:G\to G/N$ is the quotient map.

We consider the filtration 
\[F^{p}\bigwedge ^{p+q}\g^{\ast}=\{\omega\in \bigwedge^{p+q} \g^{\ast}\vert \omega(X_{1},\dots, X_{p+1})=0 \,\, {\rm for}\,\, X_{1},\dots, X_{p+1}\in \n\}.
\]
This filtration gives the filtration of the cochain complex $\bigwedge \g^{\ast}$ and the filtration of the de Rham complex $A^{\ast}(G/\Gamma)$.
We consider the spectral sequence $E^{\ast,\ast}_{\ast}(\g)$ of $\bigwedge \g^{\ast}$ and the spectral sequence $E^{\ast,\ast}_{\ast}(G/\Gamma)$ of $A^{\ast}(G/\Gamma)$.
Then we have the commutative diagram
\[\xymatrix{
	E_{2}^{\ast,\ast}(\g)\ar[r]\ar[d]^{\cong}& E_{2}^{\ast,\ast}(G/\Gamma)\ar[d]^{\cong} \\
	 H^{\ast}\left(\frak a, H^{\ast}(\frak n)\right) \ar[r]&H^{\ast}
\left(A/p(\Gamma),{\bf H}^{\ast}(N/\Gamma\cap N )\right)
 }
\]
where ${\bf H}^{\ast}(N/\Gamma\cap N )$ is the local system on the cohomology of fiber induced by the fiber bundle
(see \cite{Hatt}, \cite[Section 7]{R}).

\begin{theorem}\label{e2}
The induced map $E_{2}^{\ast,\ast}(\g)\to E_{2}^{\ast,\ast}(G/\Gamma)$ is an isomorphism.

\end{theorem}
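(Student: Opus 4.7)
The plan is to exploit the commutative diagram displayed just above and reduce the problem to showing that the bottom horizontal map
\[
H^{\ast}(\frak a, H^{\ast}(\n)) \longrightarrow H^{\ast}(A/p(\Gamma),\,{\bf H}^{\ast}(N/\Gamma\cap N))
\]
is an isomorphism. First I would identify the fiber local system. Since $\Gamma\cap N$ is a lattice in the nilpotent Lie group $N$ by Proposition \ref{2lat}, Nomizu's theorem provides a canonical (functorial) isomorphism ${\bf H}^{\ast}(N/\Gamma\cap N)\cong H^{\ast}(\n)$, the right-hand side carrying the $p(\Gamma)$-action induced from ${\rm Ad}(\Gamma)$ on the ideal $\n=[\g,\g]$.

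Next I would reduce to a statement about two commuting actions with the same Zariski closure. Since inner automorphisms act trivially on Lie algebra cohomology, the $G$-action on $H^{\ast}(\n)$ descends to the abelian group $A=G/N$ and the $\Gamma$-action descends to $p(\Gamma)$. The Mostow condition asserts that ${\rm Ad}(G)$ and ${\rm Ad}(\Gamma)$ have the same Zariski closure in ${\rm Aut}(\g_{\C})$; restricting to the ${\rm Ad}$-stable ideal $\n_{\C}$ and passing to the induced action on $H^{\ast}(\n_{\C})$, the images of $A$ and of $p(\Gamma)$ in ${\rm Aut}(H^{\ast}(\n_{\C}))$ share a common Zariski closure $H$. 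Since $A$ is abelian, $H$ is a commutative algebraic group and decomposes as $H=T\cdot U$, with reductive part $T$ a complex torus and unipotent radical $U$.

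Then I would apply the standard Koszul-type cohomology computation. For the abelian Lie algebra $\frak a$ acting on a finite dimensional complex vector space $V$ we have $H^{\ast}(\frak a, V)\cong V^{T}\otimes \bigwedge \frak a_{\C}^{\ast}$, only the zero generalized weight of the semisimple part of the action contributing; for the torus $A/p(\Gamma)$ with local system $V_{\rho}$ we have $H^{\ast}(A/p(\Gamma),V_{\rho})\cong V^{T}\otimes \bigwedge \frak a_{\C}^{\ast}$, only the trivial character of the semisimple part of $\rho$ contributing. Here $V^T$ denotes the fixed subspace under the shared reductive part $T$ of the common Zariski closure, and $V=H^{\ast}(\n_{\C})$. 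One then checks that the natural map of spectral sequences realizes this identification, since both sides come from the canonical inclusion $\bigwedge \g^{\ast}\hookrightarrow A^{\ast}(G/\Gamma)$ together with the inclusion of translation-invariant forms on the torus $A/p(\Gamma)$.

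The main obstacle I expect is showing that the trivial-weight subspace of $H^{\ast}(\n_{\C})$ under the $\frak a$-action and the trivial-character subspace under the $p(\Gamma)$-action are matched by the natural map, not just abstractly of equal dimension. Here the Mostow condition is essential: invariance under the algebraic torus $T$ is a Zariski-closed condition, hence detected on any Zariski-dense subgroup, in particular on the image of $p(\Gamma)$, and this invariance matches precisely with the weight-zero condition for the $\frak a$-action obtained by differentiation.
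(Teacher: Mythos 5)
Your reduction to the bottom map of the commutative diagram, the use of Nomizu's theorem to identify the local system with $H^{\ast}(\n)$, and the remark that triviality of a character of the reductive part $T$ is detected on the Zariski-dense image of $p(\Gamma)$ are all sound, and the last point is indeed where the Mostow condition enters. But the central computational step is false: for abelian $\frak a$ acting non-semisimply, $H^{\ast}(\frak a,V)$ is \emph{not} $V^{T}\otimes\bigwedge\frak a_{\C}^{\ast}$, and likewise $H^{\ast}(A/p(\Gamma),V_{\rho})$ is not $V^{T}\otimes\bigwedge\frak a_{\C}^{\ast}$ when the unipotent part $U$ of the common algebraic hull acts nontrivially. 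Concretely, take $\g$ nilpotent with basis $X,Y,Z,W$ and $[X,Y]=Z$, $[X,Z]=W$ (the Mostow condition is automatic for nilpotent $G$); then $\n=\langle Z,W\rangle$, $\frak a=\langle X,Y\rangle$, the hull of the action on $H^{1}(\n)$ is unipotent, so $T$ is trivial and your formula would give $H^{0}(\frak a,H^{1}(\n))\cong H^{1}(\n)$, which is two-dimensional, whereas the correct space $H^{1}(\n)^{\frak a}$ is one-dimensional; the local-system cohomology of the base torus fails in the same way. In effect your formula asserts that the unipotent part contributes as if it acted trivially (i.e. that the $E_{1}$-differential is untwisted), which is already wrong for nilmanifolds.

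What your argument is implicitly trying to reprove is exactly the nontrivial comparison theorem: after splitting off the nontrivial $T$-weights (which die on both sides, using Zariski density as you say), one must still show that on the weight-zero part, where the action is unipotent, the natural map $H^{\ast}(\frak a,V)\to H^{\ast}(p(\Gamma),V)$ is an isomorphism; this van Est/Mostow-type statement is the content of Raghunathan's Theorem 7.26, which the paper simply cites. Note also that the paper's proof is organized so that it never needs the natural map to match an explicit decomposition: it first shows $E_{r}^{\ast,\ast}(\g)\to E_{r}^{\ast,\ast}(G/\Gamma)$ is injective for every $r$, by averaging forms over $G/\Gamma$ against a bi-invariant volume to get a filtration-preserving cochain map $I$ with $I\circ i=\mathrm{id}$, and then invokes Theorem 7.26 only to equate the (finite) dimensions of the two $E_{2}$-terms, so the injective map is an isomorphism. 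To repair your proof you should either cite that comparison theorem in place of the Koszul-type formula, or supply the missing argument for unipotent coefficients over the lattice $p(\Gamma)\cong\Z^{k}$ in $A\cong\R^{k}$.
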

\begin{proof}
We first show that for each $r$, the induced map $E_{r}^{\ast,\ast}(\g)\to E_{r}^{\ast,\ast}(G/\Gamma)$ is injective.
A simply connected solvable Lie group with a lattice is unimodular (see \cite[Remark 1.9]{R}).
Let $d\mu$ be a bi-invariant volume form such that $\int_{G/\Gamma}d\mu =1$.
For $\alpha\in A^{p}(G/\Gamma)$, we have a left-invariant form $\alpha_{inv}\in \bigwedge^{p} \g^{\ast}$ defined  by
\[\alpha_{inv}(X_{1},\dots ,X_{p})=\int_{G/\Gamma}\alpha(\tilde X_{1},\dots ,\tilde X_{p})d\mu
\]
for $X_{1},\dots ,X_{p}\in \g$ where $\tilde X_{1},\dots ,\tilde X_{p}$ are vector fields on $G/\Gamma$ induced by $X_{1},\dots X_{p}$.
We define the map $I:A^{\ast}(M)\to  \bigwedge \g^{\ast}$ by $\alpha \mapsto \alpha_{inv}$.
Then this map is a cochain complex map (see \cite{KVai}) such
 that $I\circ i={\rm id}_{ \vert_{
\bigwedge \g^{\ast}}}$.
The map $I$ is compatible with the filtration as above.
Hence $I$ induces a homomorphism $E_{r}^{\ast,\ast}(G/\Gamma)\to E_{r}^{\ast,\ast}(\g)$.
This implies that the induced map  $E_{r}^{\ast,\ast}(\g)\to E_{r}^{\ast,\ast}(G/\Gamma)$ is injective.

Consider the $A$-action on $H^{\ast}(\n)$ which is the extension of the $\frak a$-action on $H^{\ast}(\n)$ given by  $0\to \frak n\to \g\to \frak a\to 0$.
Since we have $H^{\ast}(\n)\cong H^{\ast}(N/\Gamma\cap N)$.
The local system ${\bf H}^{\ast}(N/\Gamma\cap N )$ is given by the $\Gamma$-action on $H^{\ast}(\n)$ which is the restriction of  the $A$-action on $H^{\ast}(\n)$.
Since $ {\rm Ad}(G)$ and $ {\rm Ad}(\Gamma)$ have the same Zariski-closure in $ {\rm Aut}(\g_{\C})$, the images of actions $A\to {\rm Aut}(H^{\ast}(\n))$ and $p(\Gamma)\to {\rm Aut}(H^{\ast}(\n))$  have  also the same Zariski-closure in $ {\rm Aut}(H^{\ast}(\n))$.
Then by \cite[Theorem 7.26]{R}
we have
\[H^{\ast}\left(\frak a, H^{\ast}(\frak n)\right)\cong
H^{\ast}
\left(A/p(\Gamma),{\bf H}^{\ast}(N/\Gamma\cap N )\right)
\]
Hence the theorem follows.
\end{proof}

\section{Linearizations of  solvamanifolds with Mostow conditions}\label{linnn}

Consider two simply connected solvable Lie groups $G_{1}$ and $G_{2}$ with lattices $\Gamma_{1}$ and $\Gamma_{2}$.
We assume that they satisfy the Mostow condition.
Let $\phi : \Gamma_{1}\to \Gamma_{2}$ be a homomorphism.
Then we have 
\[\phi([\Gamma_{1},\Gamma_{1}])\subset [\Gamma_{2},\Gamma_{2}].\]
Hence  $\phi$ induces the homomorphism $\bar\phi:\Gamma_{1}/[\Gamma_{1},\Gamma_{1}]\to \Gamma_{2}/[\Gamma_{2},\Gamma_{2}]$.
We show
\begin{lemma}\label{com}
$\phi(\Gamma_{1}\cap [G_{1},G_{1}])\subset \Gamma_{2}\cap [G_{2},G_{2}]$. 
\end{lemma}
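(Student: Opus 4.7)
The plan is to exploit Proposition~\ref{2lat}, which asserts that both $[\Gamma_1,\Gamma_1]$ and $\Gamma_1\cap [G_1,G_1]$ are lattices in the Lie group $N_1:=[G_1,G_1]$. The trivial containment $[\Gamma_1,\Gamma_1]\subseteq \Gamma_1\cap [G_1,G_1]$, together with the standard fact that a discrete subgroup of a cocompact discrete subgroup which is itself cocompact must have finite index (since the quotient of compact covering spaces has finite fibres), then gives that $[\Gamma_1,\Gamma_1]$ has finite index in $\Gamma_1\cap [G_1,G_1]$.

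With that commensurability in hand, the first step is to take any $\gamma\in\Gamma_1\cap [G_1,G_1]$ and pick a positive integer $k$ with $\gamma^k\in [\Gamma_1,\Gamma_1]$. Applying $\phi$ and using the already-noted inclusion $\phi([\Gamma_1,\Gamma_1])\subseteq [\Gamma_2,\Gamma_2]$, I obtain
\[
\phi(\gamma)^k=\phi(\gamma^k)\in [\Gamma_2,\Gamma_2]\subseteq [G_2,G_2].
\]
Since $\phi(\gamma)\in \Gamma_2$ automatically, all that remains is to upgrade $\phi(\gamma)^k\in [G_2,G_2]$ to $\phi(\gamma)\in [G_2,G_2]$.

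For this upgrade I would pass to the abelianisation. Because $G_2$ is simply connected and solvable, $[G_2,G_2]$ is a closed connected normal subgroup and the quotient $A_2:=G_2/[G_2,G_2]$ is a simply connected abelian Lie group, hence isomorphic to some $\R^m$. Writing $q:G_2\to A_2$ for the projection, the relation above becomes $k\cdot q(\phi(\gamma))=q(\phi(\gamma)^k)=0$ in $\R^m$; since $\R^m$ is torsion free this forces $q(\phi(\gamma))=0$, i.e.\ $\phi(\gamma)\in [G_2,G_2]$. Combined with $\phi(\gamma)\in\Gamma_2$, this yields $\phi(\gamma)\in \Gamma_2\cap [G_2,G_2]$, as required.

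The only real input is the finite-index step, and this is precisely where Proposition~\ref{2lat} is indispensable: \emph{a priori} one has no control over the size of $\Gamma_1\cap [G_1,G_1]$ relative to $[\Gamma_1,\Gamma_1]$ (and in fact $[\Gamma_1,\Gamma_1]$ need not even be finitely generated without the Mostow hypothesis). Once that commensurability is supplied, the remaining argument is a trivial torsion-freeness observation in $\R^m$.
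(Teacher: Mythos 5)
Your proof is correct and follows essentially the same route as the paper: there, Proposition~\ref{2lat} is used to see that the kernel of $\Gamma_1/[\Gamma_1,\Gamma_1]\to\Gamma_1/(\Gamma_1\cap[G_1,G_1])$ consists of torsion elements (equal ranks of the two lattices in $[G_1,G_1]$), and one then concludes from torsion-freeness of the lattice $\Gamma_2/(\Gamma_2\cap[G_2,G_2])$ in $G_2/[G_2,G_2]$ --- which is exactly your finite-index-plus-torsion-freeness-of-$\R^m$ argument in slightly different clothing. The only inaccuracy is your closing aside: $[\Gamma_1,\Gamma_1]$ is always finitely generated, since a lattice in a simply connected solvable Lie group is polycyclic; what genuinely requires the Mostow-type hypothesis (via Proposition~\ref{2lat}) is that $[\Gamma_1,\Gamma_1]$ and $\Gamma_1\cap[G_1,G_1]$ are lattices in $[G_1,G_1]$, hence commensurable.
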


\begin{proof}
Consider the surjection 
\[\Gamma_{1}/[\Gamma_{1},\Gamma_{1}]\ni (g \,\,\, {\rm  mod}\,\,[\Gamma_{1},\Gamma_{1}]) \,  \mapsto (g \,\,\, {\rm  mod} \,\, \Gamma_{1}\cap [G_{1},G_{1}])\,\,\in \Gamma/\Gamma_{1}\cap [G_{1},G_{1}].\]
By Proposition \ref{2lat}, two nilpotent groups $[\Gamma_{1},\Gamma_{1}]$ and $\Gamma_{1}\cap [G_{1},G_{1}]$  have same rank and hence the kernel of this surjection consists of torsions.
This implies that for $g\in \Gamma_{1}\cap [G_{1},G_{1}]$, the element 
\[\bar\phi(g \,\,\, {\rm  mod}\,\,[\Gamma_{1},\Gamma_{1}])=\phi(g) \,\,\, {\rm  mod}\,\,[\Gamma_{2},\Gamma_{2}]\]
 is a torsion.
Since the group $\Gamma_{2}/\Gamma_{2}\cap [G_{2},G_{2}]$ is a lattice in $G_{2}/[G_{2},G_{2}]$, $\Gamma_{2}/\Gamma_{2}\cap [G_{2},G_{2}]$ is torsion-free.
Hence we have 
\[(\phi(g)\,\,\, {\rm  mod} \,\, \Gamma_{2}\cap [G_{2},G_{2}])=(0\,\,\, {\rm  mod} \,\, \Gamma_{2}\cap [G_{2},G_{2}])\]
 for $g\in \Gamma_{1}\cap [G_{1},G_{1}]$.
Thus the lemma follows.
\end{proof}

Set $N_{1}=[G_{1},G_{1}]$, $N_{2}=[G_{2},G_{2}]$, $A_{1}=G_{1}/N_{1}$ and $A_{2}=G_{2}/N_{2}$.
Let $\n_{1}$, $\n_{2}$, $\frak a_{1}$ and $\frak a_{2}$ be the Lie algebras of $N_{1}$, $N_{2}$, $A_{1}$ and $A_{2}$ respectively.
Consider the quotient maps $p_{1}:G_{1}\to A_{1}$ and   $p_{2}:G_{2}\to A_{2}$.
By Lemma \ref{com}, we have the commutative diagram
\[\xymatrix{
1\ar[r]&\Gamma_{1}\cap N_{1}\ar[r]\ar[d]^{\phi}&\Gamma_{1}\ar[r]\ar[d]^{\phi}&p_{1}(\Gamma_{1})\ar[r]\ar[d]^{\bar\phi}&1\\
1\ar[r]&\Gamma_{2}\cap N_{2}\ar[r]&\Gamma_{2}\ar[r]&p_{2}(\Gamma_{2})\ar[r]&1
}
\]

Since $\Gamma_{1}\cap N_{1}$, $\Gamma_{2}\cap N_{2}$,  $p_{1}(\Gamma_{1})$ and $p_{2}(\Gamma_{2})$ are lattices in $N_{1}$, $N_{2}$, $A_{1}$ and $A_{2}$ respectively,
we can take unique Lie group homomorphisms $\Phi_{1}:N_{1}\to N_{2}$ and $\Phi_{2}:A_{1}\to A_{2}$ which are extensions of $\phi:\Gamma_{1}\cap N_{1}\to \Gamma_{2}\cap N_{2}$ and $\bar\phi: p_{1}(\Gamma_{1})\to p_{2}(\Gamma_{2})$.

\begin{lemma}\label{lni}

We consider the spectral sequences
\[E_{0}^{\ast,\ast}(\g_{1})=\bigwedge\frak a_{1}^{\ast}\otimes \bigwedge \n_{1}^{\ast},\]
\[E_{0}^{\ast,\ast}(\g_{2})=\bigwedge\frak a_{2}^{\ast}\otimes \bigwedge \n_{2}^{\ast}\]
and
\[E_{1}^{\ast,\ast}(\g_{1})=\bigwedge\frak a_{1}^{\ast}\otimes  H^{\ast}( \n_{1}),\]
\[E_{1}^{\ast,\ast}(\g_{2})=\bigwedge\frak a_{2}^{\ast}\otimes H^{\ast} (\n_{2})\]

Then the linear map
\[\wedge\Phi_{2}^{\ast}\otimes\wedge\Phi_{1}^{\ast}:E_{0}^{\ast,\ast}(\g_{2})= \bigwedge\frak a_{2}^{\ast}\otimes \bigwedge \n^{\ast}_{2}\to \bigwedge\frak a_{1}^{\ast}\otimes \bigwedge \n^{\ast}_{1}=E_{0}^{\ast,\ast}(\g_{1})\]
is a cochain complex map and induced map
\[\wedge\Phi_{2}^{\ast}\otimes H^{\ast}(\wedge\Phi_{1}^{\ast}):E_{1}^{\ast,\ast}(\g_{2})= \bigwedge\frak a_{2}^{\ast}\otimes H^{\ast} (\n_{2})\to\bigwedge\frak a_{1}^{\ast}\otimes  H^{\ast}( \n_{1})=E_{1}^{\ast,\ast}(\g_{1})\]
is a cochain complex map.

\end{lemma}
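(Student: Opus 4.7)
The first assertion reduces to observing that $d\Phi_1:\n_1\to\n_2$ is a Lie algebra homomorphism (immediate since $\Phi_1:N_1\to N_2$ is a Lie group homomorphism), so its dual extension $\wedge\Phi_1^\ast:\bigwedge\n_2^\ast\to\bigwedge\n_1^\ast$ commutes with the Chevalley--Eilenberg differentials $d_{\n_i}$. Because $d_0$ on $E_0^{\ast,\ast}(\g_i)=\bigwedge\frak a_i^\ast\otimes\bigwedge\n_i^\ast$ is the tensor differential $1\otimes d_{\n_i}$, a direct sign check on decomposable tensors then shows that $\wedge\Phi_2^\ast\otimes\wedge\Phi_1^\ast$ is a chain map for $d_0$.

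For the second assertion, I first reduce to an equivariance condition. Because $\frak a_i=\g_i/[\g_i,\g_i]$ is abelian, the Chevalley--Eilenberg differential on $\bigwedge\frak a_i^\ast$ vanishes and $d_1$ on $\bigwedge\frak a_i^\ast\otimes H^\ast(\n_i)$ is purely the twist by the $\frak a_i$-action on $H^\ast(\n_i)$ (well defined since $\n_i$ acts trivially on its own cohomology, so the adjoint $\g_i$-action descends to $\frak a_i$). Writing $T:=H^\ast(\wedge\Phi_1^\ast)$ and $\rho_i:A_i\to{\rm Aut}(H^\ast(\n_{i,\C}))$ for the induced $A_i$-actions, a direct computation in bases of the $\frak a_i$'s shows that $\wedge\Phi_2^\ast\otimes T$ is a chain map for $d_1$ if and only if
\[
\rho_1(a)\circ T\;=\;T\circ\rho_2(\Phi_2(a))\qquad\text{for every }a\in A_1.
\]

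I would prove this equivariance in two stages. First, on the lattice $p_1(\Gamma_1)\subset A_1$: the group identity $\phi(\gamma\gamma'\gamma^{-1})=\phi(\gamma)\phi(\gamma')\phi(\gamma)^{-1}$ for $\gamma\in\Gamma_1$, $\gamma'\in\Gamma_1\cap N_1$ (with the containment of Lemma \ref{com}), combined with the uniqueness of extension of homomorphisms of lattices to the ambient nilpotent groups, yields $\Phi_1\circ c_\gamma=c_{\phi(\gamma)}\circ\Phi_1$ as Lie group homomorphisms $N_1\to N_2$; differentiating and passing to cohomology delivers the equivariance on $p_1(\Gamma_1)$, noting that $p_2(\phi(\gamma))=\Phi_2(p_1(\gamma))$ by the very construction of $\Phi_2$. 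Second, I would upgrade from $p_1(\Gamma_1)$ to all of $A_1$ by Zariski density: the condition $\rho_1(x)T=T\rho_2(y)$ is polynomial in $(x,y)$ and hence carves out a Zariski closed subset of ${\rm Aut}(H^\ast(\n_{1,\C}))\times{\rm Aut}(H^\ast(\n_{2,\C}))$, so it suffices to show that the image of $A_1$ under the combined map $a\mapsto(\rho_1(a),\rho_2(\Phi_2(a)))$ lies in the Zariski closure of the image of $p_1(\Gamma_1)$.

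The main obstacle is this density step. The Mostow hypothesis directly gives the factorwise densities $\overline{\rho_i(p_i(\Gamma_i))}^Z=\overline{\rho_i(A_i)}^Z$ in each ${\rm Aut}(H^\ast(\n_{i,\C}))$, but for the twisted-graph combined map $a\mapsto(\rho_1(a),\rho_2(\Phi_2(a)))$ one needs joint density in the product, which is not a formal consequence of the separate densities. The resolution is to recognise that the combined representation factors through an algebraic morphism out of the algebraic hull $\overline{{\rm Ad}(\Gamma_1)}^Z=\overline{{\rm Ad}(G_1)}^Z$ of $G_1$: since $\Phi_2$ is the unique Lie group extension of $\bar\phi$ to $A_1$ and $\rho_2\circ\Phi_2\circ p_1$ agrees with $\rho_2\circ p_2\circ\phi$ on $\Gamma_1$, the second component arises from an algebraic-group morphism of algebraic hulls. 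Then the Zariski density of $\Gamma_1$ in the algebraic hull of $G_1$ transfers along this single algebraic morphism to yield the needed joint density.
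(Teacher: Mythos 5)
Your treatment of the first assertion and of the lattice-level equivariance is sound and runs essentially parallel to the paper's proof: where the paper obtains the relation $H^{\ast}(\Phi_{1})\circ\Delta_{2}(\Phi_{2}(v))=\Delta_{1}(v)\circ H^{\ast}(\Phi_{1})$ for $v\in p_{1}(\Gamma_{1})$ by passing through the group cohomology of the two extensions (the $\delta_{i}$-actions) and the identification $H^{\ast}(\phi)=H^{\ast}(\Phi_{1})$ via Nomizu's theorem, you get the same relation from $\Phi_{1}\circ c_{\gamma}=c_{\phi(\gamma)}\circ\Phi_{1}$, proved by Mal'cev uniqueness of extensions of lattice homomorphisms, and then differentiate; either route is fine. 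You have also correctly isolated the delicate point: the two Mostow hypotheses give factorwise Zariski density, but the extension of the intertwining relation from $p_{1}(\Gamma_{1})$ to $A_{1}$ needs density of the image of the combined map $v\mapsto(\Delta_{1}(v),\Delta_{2}(\Phi_{2}(v)))$, which is exactly the step the paper itself dispatches with a one-sentence appeal to the Mostow condition (stated there as equality of Zariski closures of the two product sets).

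However, your proposed repair of that step has a genuine gap. You claim the combined representation factors through an algebraic morphism out of $\overline{{\rm Ad}(\Gamma_{1})}^{Z}=\overline{{\rm Ad}(G_{1})}^{Z}$, justified only by the agreement of $\rho_{2}\circ\Phi_{2}\circ p_{1}$ with $\rho_{2}\circ p_{2}\circ\phi$ on $\Gamma_{1}$ and the uniqueness of $\Phi_{2}$. Agreement on $\Gamma_{1}$ does not produce an algebraic morphism defined on the hull whose restriction to the image of \emph{all} of $G_{1}$ equals $\rho_{2}\circ\Phi_{2}\circ p_{1}$; manufacturing such an extension from data given only on the lattice is precisely the difficulty of the Mostow setting (the paper stresses that homomorphisms of lattices need not extend), so this is assertion, not proof. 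Moreover the hull you chose cannot work in general: $\ker({\rm Ad})$ is the center of $G_{1}$, and $\rho_{2}\circ\Phi_{2}\circ p_{1}$ has no reason to vanish there --- for instance if $G_{1}$ has a central one-parameter factor meeting $\Gamma_{1}$ in a copy of $\mathbb{Z}$ on which $\phi$ is nontrivial, the second component is nontrivial on $\ker({\rm Ad})$ and does not factor through ${\rm Ad}(G_{1})$ at all. A correct argument along your lines would need the full algebraic hull of $\Gamma_{1}$ and $G_{1}$ (Mostow--Raghunathan--Baues--Klopsch), a functoriality statement for arbitrary homomorphisms of lattices, the fact that under the Mostow condition $\Gamma_{1}$ and $G_{1}$ have the same Zariski closure in that hull, and a verification that the resulting algebraic morphism is compatible with the Lie-theoretic linearizations $\Phi_{1},\Phi_{2}$ on all of $G_{1}$; none of this is supplied, so the density step --- the heart of the lemma --- remains unproved in your proposal.
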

\begin{proof}
Since $\Phi_{1}$ is a homomorphism of Lie group,
 the linear map
\[\wedge\Phi_{2}^{\ast}\otimes\wedge\Phi_{1}^{\ast}:E_{0}^{\ast,\ast}(\g_{2})=\bigwedge\frak a_{2}^{\ast}\otimes \bigwedge \n^{\ast}_{2}\to \bigwedge\frak a_{1}^{\ast}\otimes \bigwedge \n^{\ast}_{1}=E_{0}^{\ast,\ast}(\g_{1})\]
is cochain complex map.
We consider the induced map
\[\wedge\Phi_{2}^{\ast}\otimes H^{\ast}(\wedge\Phi_{1}^{\ast}):E_{1}^{\ast,\ast}(\g_{2})= \bigwedge\frak a_{2}^{\ast}\otimes H^{\ast} (\n_{2})\to\bigwedge\frak a_{1}^{\ast}\otimes  H^{\ast}( \n_{1})=E_{1}^{\ast,\ast}(\g_{1}).\]
We show that this map is a cochain complex homomophism.

We consider the group cohomologies $H^{\ast}(\Gamma_{1}\cap N_{1}, \R)$ and $H^{\ast}(\Gamma_{2}\cap N_{2},\R)$ and the induced map $H^{\ast}(\phi):H^{\ast}(\Gamma_{2}\cap N_{2},\R)\to H^{\ast}(\Gamma_{1}\cap N_{1},\R)$ of $\phi: \Gamma_{1}\cap N_{1}\to \Gamma_{2}\cap N_{2}$.
By the commutative diagram
\[\xymatrix{
1\ar[r]&\Gamma_{1}\cap N_{1}\ar[r]\ar[d]^{\phi}&\Gamma_{1}\ar[r]\ar[d]^{\phi}&p_{1}(\Gamma_{1})\ar[r]\ar[d]^{\bar\phi}&1\\
1\ar[r]&\Gamma_{2}\cap N_{2}\ar[r]&\Gamma_{2}\ar[r]&p_{2}(\Gamma_{2})\ar[r]&1,
}
\]
for the $p_{1}(\Gamma_{1})$-action $\delta_{1}:p_{1}(\Gamma_{1})\to {\rm Aut}(H^{\ast}(\Gamma_{1}\cap N_{1}, \R))$ and the $p_{2}(\Gamma_{2})$-action $\delta_{2}:p_{2}(\Gamma_{2})\to {\rm Aut}(H^{\ast}(\Gamma_{2}\cap N_{2}, \R))$,
we have
\[H^{\ast}(\phi)\circ \delta_{2}( \bar\phi(g))= \delta_{1}(g)\circ H^{\ast}(\phi).\]

By the isomorphisms,
\[H^{\ast}(\Gamma_{1}\cap N_{1}, \R)\cong H^{\ast}(N_{1}/\Gamma_{1}\cap N_{1},\R)\cong H^{\ast}(\n_{1})
\]
and 
\[H^{\ast}(\Gamma_{2}\cap N_{2}, \R)\cong H^{\ast}(N_{2}/\Gamma_{2}\cap N_{2},\R)\cong H^{\ast}(\n_{2}),
\]
we have $H^{\ast}(\phi)=H^{\ast}(\Phi_{1})$.
Consider the $A_{1}$-action $\Delta_{1}:A\to {\rm Aut}(H^{\ast}(\n_{1}))$ induced by the extension
$1\to N_{1}\to G_{1}\to A_{1}\to 1$
 and  $A_{2}$-action $\Delta_{2}:A\to {\rm Aut}(H^{\ast}(\n_{2}))$ induced by the extension
$1\to N_{2}\to G_{2}\to A_{2}\to 1$.
By $H^{\ast}(\phi)=H^{\ast}(\Phi_{1})$ and $H^{\ast}(\phi)\circ \delta_{2}( \bar\phi(g))= \delta_{1}(g)\circ H^{\ast}(\phi)$, we have
\[H^{\ast}(\Phi_{1})\circ \Delta_{2}( \Phi_{2}(v))=\Delta_{1}(v)\circ H^{\ast}(\Phi_{1})
\]
for all $v\in p(\Gamma_{1})\subset A_{1}$.
By the Mostow condition,  $\Delta_{1}(A_{1}) \times\Delta_{2}(\Phi_{2}(A_{2}))$ and $\Delta_{1}(p_{1}(\Gamma_{1}))\times \Delta_{2}(\Phi_{2}(p_{2}(\Gamma_{2})))$ have the same Zariski-closure in ${\rm Aut}(H^{\ast}(\n_{1}))\times {\rm Aut}(H^{\ast}(\n_{2}))$.
By this we have 
\[H^{\ast}(\Phi_{1})\circ \Delta_{2}( \Phi_{2}(v))=\Delta_{1}(v)\circ H^{\ast}(\Phi_{1})
\]
for all $v\in A_{1}$.

Consider the Lie algebra homomorphism $ \Phi_{2\ast}:\frak a_{1}\to \frak a_{2}$ and the $\frak a_{1}$-action $\Delta_{1\ast}:\frak a_{1}\to {\rm End}(H^{\ast}(\n_{1}))$ 
 and  $\frak a_{2}$-action $\Delta_{2\ast}:\frak a_{2\ast}\to {\rm End}(H^{\ast}(\n_{2}))$.
Then we have 
\[H^{\ast}(\Phi_{1})\circ \Delta_{2\ast}( \Phi_{2\ast}(V))=\Delta_{1\ast}(V)\circ H^{\ast}(\Phi_{1})
\]
for all $V\in \frak a_{1}$.
This implies that the map
\[\wedge\Phi_{2}^{\ast}\otimes H^{\ast}(\wedge\Phi_{1}^{\ast}):E_{1}^{\ast,\ast}(\g_{2})= \bigwedge\frak a_{2}^{\ast}\otimes H^{\ast} (\n_{2})\to\bigwedge\frak a_{1}^{\ast}\otimes  H^{\ast}( \n_{1})=E_{1}^{\ast,\ast}(\g_{1}).\]
is a cochain complex homomophism,
since the differentials of the  cochain complexes $E_{1}^{\ast,\ast}(\g_{1})= \bigwedge\frak a_{1}^{\ast}\otimes H^{\ast}( \n_{1})$ and $E_{1}^{\ast,\ast}(\g_{2})=\bigwedge\frak a_{2}^{\ast}\otimes  H^{\ast}( \n_{2})$ are twisted by the $\frak a_{1}$-action $\Delta_{1\ast}:\frak a_{1}\to {\rm End}(H^{\ast}(\n_{1}))$ 
 and  the $\frak a_{2}$-action $\Delta_{2\ast}:\frak a_{2\ast}\to {\rm End}(H^{\ast}(\n_{2}))$ respectively.
\end{proof}

Let $f:G_{1}/\Gamma_{1}\to G_{2}/\Gamma_{2}$ be a continuous map.
We consider the induced map $f_{\ast}:\pi_{1}(G_{1}/\Gamma_{1})\cong \Gamma_{1}\to \Gamma_{2}\cong G_{2}/\Gamma_{2}$.
We write $\phi=f_{\ast}$.
In this case,  the pair $\Phi_{1}$,  $\Phi_{2} $ constructed as above is called the linearlization of $f$.
Consider the spectral sequences
$E_{r}^{\ast,\ast}(G_{1}/\Gamma_{1})$ and $E_{r}^{\ast,\ast}(G_{2}/\Gamma_{2})$
as Section \ref{cos}.
Then for $r\ge 2$, $E_{r}^{\ast,\ast}(G_{1}/\Gamma_{1})$ and $E_{r}^{\ast,\ast}(G_{2}/\Gamma_{2})$ are identified with the Leray-Serre spectral sequences.
By commutative diagram
\[\xymatrix{
1\ar[r]&\Gamma_{1}\cap N_{1}\ar[r]\ar[d]^{\phi}&\Gamma_{1}\ar[r]\ar[d]^{\phi}&p_{1}(\Gamma_{1})\ar[r]\ar[d]^{\bar\phi}&1\\
1\ar[r]&\Gamma_{2}\cap N_{2}\ar[r]&\Gamma_{2}\ar[r]&p_{2}(\Gamma_{2})\ar[r]&1,
}
\]
Any continous map from $G_{1}/\Gamma_{1}$ to $G_{2}/\Gamma_{2}$ is homotopic to a continous map $f:G_{1}/\Gamma_{1}\to G_{2}/\Gamma_{2}$ which is a fiber-preserving map as
\[\xymatrix{
1\ar[r]&N_{1}/\Gamma_{1}\cap N_{1}\ar[r]\ar[d]^{f}&G_{1}/\Gamma_{1}\ar[r]\ar[d]^{f}&A_{1}/p_{1}(\Gamma_{1})\ar[r]\ar[d]^{\bar f}&1\\
1\ar[r]&N_{2}/\Gamma_{2}\cap N_{2}\ar[r]&G_{2}/\Gamma_{2}\ar[r]&A_{2}/p_{2}(\Gamma_{2})\ar[r]&1.
}
\]
Consider the induced map $E^{\ast,\ast}_{r}(f): E_{r}^{\ast,\ast}(G_{1}/\Gamma_{1})\to E_{r}^{\ast,\ast}(G_{2}/\Gamma_{2})$.
Then 
\[E^{\ast,\ast}_{2}(f):H^{\ast}
\left(A_{2}/p(\Gamma_{2}),{\bf H}^{\ast}(N_{2}/\Gamma_{2}\cap N_{2} )\right)\to H^{\ast}
\left(A_{1}/p(\Gamma_{1}),{\bf H}^{\ast}(N_{1}/\Gamma_{1}\cap N_{1} )\right)
\]
is induced by the fiber map $f : N_{1}/\Gamma_{1}\cap N_{1}\to N_{2}/\Gamma_{2}\cap N_{2}$ and the  base space map  $\bar f :A_{1}/p(\Gamma_{1})\to A_{2}/p(\Gamma_{2})$ (see \cite{mc}).
Consider the linearlization $\Phi_{1}$,  $\Phi_{2} $  of $f$ and induced maps $\underline{ \Phi_{1}}:N_{1}/\Gamma_{1}\cap N_{1}\to N_{2}/\Gamma_{2}\cap N_{2}$ and $\underline{ \Phi_{2}} :A_{1}/p(\Gamma_{1})\to A_{2}/p(\Gamma_{2})$.
Then the fiber map $f : N_{1}/\Gamma_{1}\cap N_{1}\to N_{2}/\Gamma_{2}\cap N_{2}$ and the  base space map  $\bar f :A_{1}/p(\Gamma_{1})\to A_{2}/p(\Gamma_{2})$ are homotopic to $\underline{ \Phi_{1}}:N_{1}/\Gamma_{1}\cap N_{1}\to N_{2}/\Gamma_{2}\cap N_{2}$ and $\underline{ \Phi_{2}} :A_{1}/p(\Gamma_{1})\to A_{2}/p(\Gamma_{2})$ respectively.
By Theorem \ref{e2}, we have 
\[H^{\ast}(\frak a_{1}, H^{\ast}(\n_{1}))\cong H^{\ast}
\left(A_{1}/p(\Gamma_{1}),{\bf H}^{\ast}(N_{1}/\Gamma_{1}\cap N_{1} )\right)\]
and
\[H^{\ast}(\frak a_{2}, H^{\ast}(\n_{2}))\cong H^{\ast}
\left(A_{2}/p(\Gamma_{2}),{\bf H}^{\ast}(N_{2}/\Gamma_{2}\cap N_{2} )\right).\]
By these isomorphisms, $E^{\ast,\ast}_{2}(f)$ is induced by $\wedge\Phi^{\ast}_{1}:\bigwedge \n^{\ast}_{2}\to \bigwedge \n^{\ast}_{1}$ and $\wedge\Phi^{\ast}_{2}:\bigwedge \frak a^{\ast}_{2}\to \bigwedge \frak a^{\ast}_{1}$.
Hence by Lemma \ref{lni} we have:
\begin{lemma}\label{sem}
The map
\[E_{2}(f): E_{2}^{\ast,\ast}(G_{2}/\Gamma_{2})\to E_{2}^{\ast,\ast}(G_{1}/\Gamma_{1})\]
is identified with the map
\[H^{\ast}(\wedge\Phi_{2}^{\ast})\otimes H^{\ast}(\wedge\Phi_{1}^{\ast}):E_{2}^{\ast,\ast}(\g_{2})= H^{\ast}\left(\frak a_{1}, H^{\ast}(\frak n_{2})\right)\to H^{\ast}\left(\frak a_{1}, H^{\ast}(\frak n_{1})\right)=E_{2}^{\ast,\ast}(\g_{1})\]
induced by
the  cochain complex map
\[\wedge\Phi_{2}^{\ast}\otimes H^{\ast}(\wedge\Phi_{1}^{\ast}):E_{1}^{\ast,\ast}(\g_{2})= \bigwedge\frak a_{2}^{\ast}\otimes H^{\ast} (\n_{2})\to\bigwedge\frak a_{1}^{\ast}\otimes  H^{\ast}( \n_{1})=E_{1}^{\ast,\ast}(\g_{1})\]
as in  Lemma \ref{lni}.
\end{lemma}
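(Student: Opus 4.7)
The plan is to assemble the lemma from the preparatory discussion that precedes it, using three successive identifications: homotopy reduction of $f$ to a fiber-preserving map, translation from Leray--Serre to the Lie algebra spectral sequence via Theorem \ref{e2}, and finally compatibility with the linearization data furnished by Lemma \ref{lni}. The statement is really an organizational one: all the ingredients have been set up, but I need to verify that the natural identifications intertwine the maps induced by $f$ with those induced by $\wedge\Phi_1^\ast$ and $\wedge\Phi_2^\ast$.

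First, using the commutative diagram of short exact sequences together with Proposition \ref{2lat}, I would argue that $f$ is homotopic to a fiber-preserving map (as already indicated in the excerpt). For $r\ge 2$ the spectral sequence $E_r^{\ast,\ast}(G_i/\Gamma_i)$ coming from the filtration by $\mathfrak{n}_i$-horizontal forms coincides with the Leray--Serre spectral sequence of the fibration $N_i/\Gamma_i\cap N_i \to G_i/\Gamma_i \to A_i/p_i(\Gamma_i)$, so $E_2(f)$ is determined by the fiber map $f|_{\text{fiber}}\colon N_1/\Gamma_1\cap N_1 \to N_2/\Gamma_2\cap N_2$ and the base map $\bar f\colon A_1/p_1(\Gamma_1)\to A_2/p_2(\Gamma_2)$. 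Since $f_\ast=\phi$, the restriction of $f$ to fibers induces $\phi\colon \Gamma_1\cap N_1\to \Gamma_2\cap N_2$ on fundamental groups and similarly for $\bar f$; because fiber and base are an aspherical nilmanifold and a torus respectively, these maps are homotopic to $\underline{\Phi_1}$ and $\underline{\Phi_2}$.

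Next I would invoke Theorem \ref{e2} to obtain isomorphisms
\[
H^{\ast}(\mathfrak{a}_i, H^{\ast}(\mathfrak{n}_i))\cong H^{\ast}\!\left(A_i/p_i(\Gamma_i),{\bf H}^{\ast}(N_i/\Gamma_i\cap N_i)\right),\qquad i=1,2,
\]
and observe that under these isomorphisms the Leray--Serre $E_2$ map $E_2(f)$ is identified with the map induced on twisted group cohomology by $\underline{\Phi_2}$ on the base together with $H^\ast(\underline{\Phi_1})$ on coefficients. Because $\underline{\Phi_1}$ is induced by a Lie group homomorphism, the Nomizu-type isomorphism $H^\ast(N_i/\Gamma_i\cap N_i)\cong H^\ast(\mathfrak{n}_i)$ intertwines $H^\ast(\underline{\Phi_1})$ with $H^\ast(\wedge \Phi_1^\ast)$; similarly, the map $\underline{\Phi_2}$ on the torus corresponds to $\wedge\Phi_2^\ast$ on invariant forms. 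Hence $E_2(f)$ corresponds to the $E_2$-map of the cochain morphism $\wedge\Phi_2^\ast\otimes H^\ast(\wedge\Phi_1^\ast)$ from Lemma \ref{lni}.

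The step I expect to require the most care is the verification that the $A_i$-action on $H^\ast(\mathfrak{n}_i)$ (which governs the twisted coefficients on the torus side) is faithfully encoded, via the Mostow condition, by the $p_i(\Gamma_i)$-action on $H^\ast(\Gamma_i\cap N_i,\mathbb{R})$, in a way that is compatible with the square of linearizations $(\Phi_1,\Phi_2)$. This compatibility is exactly the content of the identity $H^\ast(\Phi_1)\circ \Delta_{2\ast}(\Phi_{2\ast}(V))=\Delta_{1\ast}(V)\circ H^\ast(\Phi_1)$ already proved in Lemma \ref{lni}, so once that identification is invoked the remaining diagram chase is routine and the lemma follows.
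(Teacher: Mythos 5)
Your proposal is correct and follows essentially the same route as the paper: homotoping $f$ to a fiber-preserving map, identifying the filtration spectral sequence with the Leray--Serre spectral sequence for $r\ge 2$ so that $E_2(f)$ is determined by the fiber and base maps, replacing those up to homotopy by $\underline{\Phi_1}$ and $\underline{\Phi_2}$, and then combining Theorem \ref{e2} with the compatibility established in Lemma \ref{lni}. The only difference is that you make explicit (via asphericity and the Nomizu isomorphism) some identifications the paper leaves implicit, which is a harmless elaboration rather than a different argument.
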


\section{Lefschetz coincidence numbers of Mostow solvamanifolds}

\begin{theorem}
Let $G_{1}$ and $G_{2}$ be simply connected solvable Lie groups of the same dimension  with lattices $\Gamma_{1}$ and $\Gamma_{2}$.
We assume they satisfy the Mostow condition.
Let $f,g: G_{1}/\Gamma_{1}\to G_{2}/\Gamma_{2}$ be  continuous maps.
Take linearizations $\Phi_{1}$,   $\Phi_{2}$  of $f$ and $\Psi_{1}$,  $\Psi_{2}$  of $g$ as Section \ref{linnn}.
Take representation matrices $A_{1}$, $A_{2}$, $B_{1}$ and $B_{2}$ of  $\Phi_{1\ast} $, $\Phi_{2\ast} $, $\Psi_{1\ast} $ and $\Psi_{2\ast} $ associated with basis of Lie algebras.
Let $A=A_{1}\oplus A_{2}$ and $B=B_{1}\oplus B_{2}$.
Then we have
\[L(f,g)={\rm det}(A-B).
\]

\end{theorem}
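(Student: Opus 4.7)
The plan is to transfer the computation of $L(f,g)$ from the de Rham cohomology of $G_2/\Gamma_2$ down to the $E_0$-term of the Hochschild--Serre spectral sequence for the Lie algebras $\g_i$, where the Ha--Lee--Penninckx formula (Theorem~\ref{HL}) applies.

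First I would apply Lemma~\ref{spel} to the de Rham complexes $A^{\ast}(G_i/\Gamma_i)$ with the filtration from Section~\ref{cos}. The $n$-PD-type hypotheses hold: $H^{\ast}(G_i/\Gamma_i)$ is PD by Poincar\'e duality for compact oriented manifolds, and by Theorem~\ref{e2} we have $E_r^{\ast,\ast}(G_i/\Gamma_i)\cong E_r^{\ast,\ast}(\g_i)$ for $r\ge 2$, which is $n$-PD-type by Lemma~\ref{fffi}. Taking $r=2$ gives
\[
L(f,g) = L({\rm Tot}^{\ast} E_2^{\ast,\ast}(f), {\rm Tot}^{\ast} E_2^{\ast,\ast}(g)),
\]
and by Lemma~\ref{sem} the right-hand side equals the Lefschetz coincidence number at $E_2^{\ast,\ast}(\g_i)$ of the maps induced by the linearizations $(\Phi_1,\Phi_2)$ of $f$ and $(\Psi_1,\Psi_2)$ of $g$.

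Next I would descend from $E_2$ to $E_0$ on the Lie algebra side. Although $\Phi_{1\ast}\oplus \Phi_{2\ast}:\g_1\to\g_2$ is not a Lie algebra homomorphism, Lemma~\ref{lni} shows it induces chain maps at $E_0$ and $E_1$, and by naturality this persists at each later page. Since each $(E_r^{\ast,\ast}(\g_i), d_r)$ is an $n$-PD-type DGA (Lemma~\ref{fffi}), applying Lemma~\ref{spel} with trivial filtration at each step $r\to r+1$ (i.e., the Hopf lemma for Lefschetz coincidence numbers on a PD-type DGA) yields
\[
L(E_0^{\ast,\ast}(\Phi), E_0^{\ast,\ast}(\Psi)) = L(E_2^{\ast,\ast}(\Phi), E_2^{\ast,\ast}(\Psi)).
\]

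Finally, a vector-space splitting of $0\to\n_i\to\g_i\to \frak{a}_i\to 0$ identifies $E_0^{\ast,\ast}(\g_i) = \bigwedge \frak{a}_i^{\ast}\otimes \bigwedge \n_i^{\ast}\cong \bigwedge \g_i^{\ast}$ as graded $\R$-algebras of $n$-PD-type, and under this identification the $E_0$-level map $\wedge \Phi_2^{\ast}\otimes \wedge \Phi_1^{\ast}$ becomes $\wedge \Phi^{\ast}$ with $\Phi=\Phi_{1\ast}\oplus \Phi_{2\ast}$ represented by $A = A_1\oplus A_2$, and similarly for $\Psi$ with $B = B_1\oplus B_2$. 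Applying Theorem~\ref{HL} to the dual maps $\Phi^{\ast},\Psi^{\ast}:\g_2^{\ast}\to \g_1^{\ast}$ (whose matrices are the transposes of $A$ and $B$) gives
\[
L(f,g) = L(\wedge\Phi^{\ast}, \wedge\Psi^{\ast}) = \det(A^T - B^T) = \det(A-B).
\]
The main obstacle is keeping track of the $n$-PD-type structure across the two spectral sequences and verifying that the linearizations induce chain maps at every page of the Lie algebra spectral sequence; this hinges on Lemma~\ref{lni}, since the direct vector-space sum $\Phi_{1\ast}\oplus\Phi_{2\ast}$ is not a Lie algebra map on $\g_i$ itself, so it does not give a cochain map on $\bigwedge\g_i^{\ast}$ with the Chevalley--Eilenberg differential.
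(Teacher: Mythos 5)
Your proposal is correct and follows essentially the same route as the paper: reduce $L(f,g)$ to the $E_{2}$-term via Lemma~\ref{spel} and Lemma~\ref{sem}, descend to the $E_{0}$-term $\bigwedge\frak a^{\ast}\otimes\bigwedge\n^{\ast}$ by the Hopf-lemma argument on the PD-type pages, and conclude with Theorem~\ref{HL} applied to $\Phi_{1\ast}\oplus\Phi_{2\ast}$ and $\Psi_{1\ast}\oplus\Psi_{2\ast}$. Your version merely makes explicit the PD-type verifications and the page-by-page descent that the paper compresses into ``by Lemma~\ref{sem} and the Hopf lemma.''
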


\begin{proof}
By Lemma \ref{spel}, we have
\[L(f,g)=L({\rm Tot}^{\ast}E^{\ast,\ast}_{2}(f),{\rm Tot}^{\ast}E^{\ast,\ast}_{2}(g)).
\]
By Lemma \ref{sem} and the Hopf lemma, we have
\[L({\rm Tot}^{\ast}E^{\ast,\ast}_{2}(f),{\rm Tot}^{\ast}E^{\ast,\ast}_{2}(g))=L(\wedge\Phi_{2}^{\ast}\otimes\wedge\Phi_{1}^{\ast}, \wedge\Psi_{2}^{\ast}\otimes\wedge\Psi_{1}^{\ast}).
\]
Take  bases $\{X^{1}_{1},\dots, X^{1}_{n}\}$, $\{Y_{1}^{1},\dots,Y_{m}^{1}\}$, $\{ X_{1}^{2},\dots, X_{n^{\prime}}^{2}\}$ and $\{Y_{1}^{2},\dots, Y_{m^{\prime}}^{2}\}$ of $\n_{1}$, $\frak a_{1}$, $\n_{2}$ and $\frak a_{2}$ which give representation matrices $A_{1}$, $A_{2}$, $B_{1}$ and $B_{2}$ of  $\Phi_{1\ast} $, $\Phi_{2\ast} $, $\Psi_{1\ast} $ and $\Psi_{2\ast} $ respectively.
Consider the dual bases $\{x^{1}_{1},\dots, x^{1}_{n}\}$, $\{y_{1}^{1},\dots,y_{m}^{1}\}$, $\{x_{1}^{2},\dots, x_{n^{\prime}}^{2}\}$ and $\{ y_{1}^{2},\dots, y_{m^{\prime}}^{2}\}$ of these bases respectively
Then we have
\[\bigwedge\frak a_{1}^{\ast}\otimes \bigwedge \n_{1}^{\ast}=\bigwedge \langle x^{1}_{1},\dots, x^{1}_{n}, y_{1}^{1},\dots,y_{m}^{1} \rangle,
\]
\[\bigwedge\frak a_{2}^{\ast}\otimes \bigwedge \n_{2}^{\ast}=\bigwedge \langle x^{2}_{1},\dots, x^{2}_{n}, y_{1}^{2},\dots,y_{m}^{2} \rangle
\]
and the maps  $\wedge\Phi_{2}^{\ast}\otimes\wedge\Phi_{1}^{\ast}$ and $\wedge\Psi_{2}^{\ast}\otimes\wedge\Psi_{1}^{\ast}$ are represented by $\wedge A^{\ast}$ and $\wedge B^{\ast}$ respectively.
 Hence we have 
\[L(f,g)=L(\wedge\Phi_{2}^{\ast}\otimes\wedge\Phi_{1}^{\ast}, \wedge\Psi_{2}^{\ast}\otimes\wedge\Psi_{1}^{\ast})=L(\wedge A^{\ast}, \wedge B^{\ast}).
\]
By Theorem \ref{HL}, we have
\[L(\wedge A^{\ast}, \wedge B^{\ast})={\rm det}(A^{\ast}-B^{\ast})={\rm det}(A-B).
\]
Hence the theorem follows.
\end{proof}

\end{document}